\newcommand{\R}{{\Bbb R}}
\newcommand{\N}{{\Bbb N}}
\newcommand{\C}{{\Bbb C}}
\title[Uniqueness of fast travelling fronts]{Uniqueness of fast travelling fronts in
reaction-diffusion equations with delay}
\author[Aguerrea, Trofimchuk and Valenzuela]{
{Maitere Aguerrea,} {Sergei Trofimchuk$^*$\footnotetext{$^*$Author
for correspondence (trofimch@inst-mat.utalca.cl)}} {and Gabriel
Valenzuela}} \affiliation{Instituto de Matem\'atica y Fisica,
Universidad de Talca, Casilla 747, Talca, Chile  } \label{firstpage}
\begin{document}

\maketitle

\label{firstpage} \maketitle
\begin{abstract}{Time-delayed reaction-diffusion equation; monostable
case; uniqueness; travelling front; single species population model.
} We consider positive travelling fronts $u(t,x) = \phi(\nu \cdot x
+ct), \ \phi(-\infty) =0,\ \phi(\infty) =\kappa$, of the equation
$u_t(t,x) = \Delta u(t,x)- u(t,x) + g(u(t-h,x)), \ x \in \R^m \ (*)
$. It is assumed that $(*)$ has exactly two non-negative equilibria:
$u_1 \equiv 0$ and $u_2 \equiv \kappa>0$. The birth function $g \in
C^2(\R, \R)$ may be non-monotone on $[0,\kappa]$. Hence, we are
concerned with the so-called monostable case of the time-delayed
reaction-diffusion equation.  Our main result says that for every
fixed and sufficiently large velocity $c$, the positive travelling
front $\phi(\nu \cdot x +ct)$ is unique (modulo translations).
Notice that $\phi$ can be non-monotone.  To prove the uniqueness, we
introduce a small parameter $\varepsilon = 1/c$ and realize the
Lyapunov-Schmidt reduction in a scale of Banach spaces.
\end{abstract}

\section{Introduction and main result}
\label{sec.int} \noindent  In this paper, we consider the
time-delayed reaction-diffusion equation
\begin{equation}\label{17}
u_t(t,x) = \Delta u(t,x)  - u(t,x) + g(u(t-h,x)), \ u(t,x) \geq 0,\
x \in \R^m.
\end{equation}
Eq. (\ref{17}) and its non-local versions are used widely to model
many physical, chemical, ecological and biological processes, see
Faria {\it et al.} (2006) for more references. The nonlinearity
$g$  is referred to in the ecology literature as the {\it birth
function}, and we will suppose that $-x+g(x)$  is of the
monostable type. Thus Eq. (\ref{17}) has exactly two non-negative
equilibria $u_1 \equiv 0, \ u_2 \equiv \kappa >0$. We say that the
wave solution $u(x,t) = \phi(\nu \cdot x +ct),$ $ \|\nu\| =1,$ of
(\ref{17}) is a wavefront (or a travelling front), if the profile
function $\phi$ satisfies the boundary conditions $\phi(-\infty) =
0$ and $\phi(+\infty) = \kappa$. After scaling, such a profile
$\phi$ is a positive heteroclinic solution of the delay
differential equation
\begin{equation}\label{twe0}
\varepsilon^2 x''(t) - x'(t)-x(t)+ g(x(t-h))=0,\ \varepsilon := 1/c
>0, \quad t \in \R.
\end{equation}
Notice that $\phi$ may not be monotone. Since the biological
interpretation of $u$ is the size of an adult population, we will
consider  only positive travelling fronts.

If we take $h = 0$ in (\ref{17}), we obtain a monostable
reaction-diffusion equations without delay. The problem of
existence of travelling fronts for this equation is quite well
understood. In particular, for each such equation we can indicate
a positive real number $c_*$ such that, for every $c \geq c_*$, it
has exactly one travelling front $u(x,t) = \phi(\nu \cdot x +ct)$.
Furthermore, Eq. (\ref{17}) does not have any travelling front
propagating at the velocity $c < c_*$. The profile $\phi$ is
necessarily strictly increasing function. See, for example,
Theorem 8.3 (ii), Theorem 8.7 and Theorem 2.39 in Gilding \&
Kersner (2004).

However, the situation will change drastically if we take $h
>0$. Actually, at the present moment, it seems that we are far from
proving similar results concerning  the existence, uniqueness and
geometric properties of wavefronts for delayed equation
(\ref{17}). This despite that fact that the existence of
travelling fronts in (\ref{17}) was recently intensively studied
for some specific subclasses of birth functions. E.g. see So, Wu
\& Zou (2001), Wu \& Zou (2001), Faria {\it et al.} (2006), Ma
(2007), Trofimchuk \& Trofimchuk (2008) and references wherein.
Certainly, so called monotone case (when $g$ is monotone on
$[0,\kappa]$) is that one for which the most information is
available.  But so far, even for equations with monotone birth
functions very little is known about the number of wavefronts
(modulo translation) for an arbitrary fixed $c\geq c_*$.  In
effect, there exist a very few theoretical studies devoted to the
uniqueness problem for equations similar to (\ref{17}). To the
best of our knowledge, the uniqueness was established only for
small delays in Ai (2007) and for a family of unimodal and
piece-wise linear birth functions in Trofimchuk {\it et al.}
(2007). The mentioned family is rather representative since {\it
'asymmetric'} tent maps mimic the main features of general
unimodal birth functions. In fact, we believe that the uniqueness
of positive wavefront can be proved for delayed equations with the
unimodal birth function satisfying the following assumptions:
\begin{description}
\item[{\rm \bf(H)}] The steady state $x_1(t) \equiv \kappa >0$ (respectively $x_2(t)\equiv
0$) of  the equation
\begin{equation}\label{17a}
x'(t) =   - x(t) + g(x(t-h))
\end{equation}
is exponentially  stable and globally attractive (respectively
hyperbolic).
\end{description}
\begin{description}
\item[{\rm \bf(G)}]
$g \in C^1(\R_+, \R_+), \ p:=g'(0)>1,$ and $g''(x)$ exists and is
bounded near $0$. We suppose that $g$ has exactly two fixed points
$0$ and $\kappa>0$. Set $A = \sup\{a \in(0,\kappa/2]: g'(x)
>0 , \ x \in [0,a) \}$ and $\zeta_2 = \max
_{x \in [0,\kappa]}g(x)$, we assume  that $g(x) > 0$ for $x \in
(0, \zeta_2]$. Then there exists  a positive $\zeta_1 \leq \min
\{g (\zeta_2), A\}$ such that $g(\zeta_1) = \min_{s \in
[\zeta_1,\zeta_2]}g(s)$. Notice that
$g([\zeta_1,\zeta_2])\subseteq [\zeta_1,\zeta_2]$.  Without
restricting the generality, we can suppose that $\sup_{s \geq 0}
g(s) \leq \zeta_2$.
\end{description}
In this paper, we follow the approach of Faria {\it et al.} (2006)
to prove the uniqueness (up to translations) of positive wavefront
for a given fast speed $c$. In the case of (\ref{17}), this
approach essentially relies on the fact that, in 'good' spaces and
with suitable $g'(0),$ $g'(\kappa)$, the linear operator
$(\mathcal{L}x)(t)= x'(t)+x(t)-g'(\psi(t-h))x(t-h)$ is a
surjective Fredholm operator. Here $\psi$  is a heteroclinic
solution of equation (\ref{twe0}) considered with $\varepsilon=0$.
In consequence, the Lyapunov-Schmidt reduction can be used to
prove the existence of a smooth family of travelling fronts in
some neighborhood of $\psi$. As it was shown in Faria \&
Trofimchuk (2006)  this family contains positive solutions as
well. However, an important and natural question about the number
of the positive wavefronts has not been answered in the past. We
solve this problem in the present paper, establishing the
following result:
\begin{theorem} \label{mr}  Assume {\rm \bf(H)}, {\rm \bf(G)}. Then there
exists a unique (modulo translations) positive wavefront of Eq.
(\ref{17}) for each sufficiently large speed $c$.
\end{theorem}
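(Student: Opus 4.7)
The plan is to treat (\ref{twe0}) as a singular perturbation of its $\varepsilon=0$ reduction
\[
x'(t) + x(t) - g(x(t-h)) = 0,
\]
and to implement the Faria--Trofimchuk Lyapunov--Schmidt scheme indicated at the end of the introduction. As preparation I would first quote (or re-establish under (H), (G)) the existence and uniqueness, modulo translation, of a positive heteroclinic $\psi:\R\to(0,\kappa)$ of this first-order delay equation with $\psi(-\infty)=0$, $\psi(+\infty)=\kappa$, together with the precise exponential decay rates at $\pm\infty$ dictated by $p=g'(0)>1$ and $g'(\kappa)$. Next I would introduce weighted Banach spaces $E_\mu, F_\mu$ with exponential weight $e^{\mu t}$ at $-\infty$, chosen so that exactly the dominant characteristic root of the linearisation at $0$ sits on the correct side of the weight contour. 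Hypothesis (H) is precisely what makes
\[
(\mathcal{L}y)(t)=y'(t)+y(t)-g'(\psi(t-h))y(t-h)
\]
a surjective Fredholm operator with one-dimensional kernel $\mathrm{span}(\psi')$.

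The core of the argument is the singular perturbation step. Because the second-order term $\varepsilon^2\phi''$ disappears at $\varepsilon=0$, the map
\[
G(\phi,\varepsilon):=\varepsilon^{2}\phi''(t)-\phi'(t)-\phi(t)+g(\phi(t-h))
\]
cannot be differentiated in a single fixed space while keeping $G(\cdot,0)$ defined on the same domain. The remedy flagged in the introduction is to build a scale $\{E^k_\mu\}_{k\ge 0}$ of weighted $C^k$-type spaces and to view $G(\cdot,\varepsilon):E^2_\mu\to F_\mu$ as a family of Fredholm perturbations whose linearisation at $(\psi,0)$ factors through the inclusion $E^2_\mu\hookrightarrow E^1_\mu$, so that the Fredholm data of $\mathcal{L}$ are preserved. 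Fixing a topological complement $X_1$ of $\mathrm{span}(\psi')$ in $E^2_\mu$, a normalisation of the translation (for instance $\phi(0)=\kappa/2$), and writing $\phi=\psi+v$ with $v\in X_1$, the Implicit Function Theorem yields, for every sufficiently small $\varepsilon>0$, a unique $v=v(\varepsilon)\in X_1$ with $v(0)=0$ such that $\phi_\varepsilon=\psi+v(\varepsilon)$ solves $G(\phi_\varepsilon,\varepsilon)=0$. Translation invariance then promotes this to the one-parameter family exhausting every positive wavefront that lies in a sufficiently small $E^2_\mu$-neighbourhood of $\psi$.

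The main obstacle, and what I expect to carry most of the technical weight, is upgrading this local conclusion to \textbf{global} uniqueness. Concretely, I must rule out the existence of a second positive front $\widetilde\phi_\varepsilon$ that stays bounded away from every translate of $\psi$ in the $E^2_\mu$-topology. The natural strategy is an a priori compactness argument: given any sequence of positive wavefronts $\widetilde\phi_{\varepsilon_n}$ with $\varepsilon_n\to 0$, use the uniform bound $0\le\widetilde\phi_{\varepsilon_n}\le \zeta_2$ from (G), the equation itself to control $\widetilde\phi'_{\varepsilon_n}$ and $\varepsilon_n\widetilde\phi''_{\varepsilon_n}$ locally, and a common normalisation of translation, to extract a subsequence converging to a positive heteroclinic $\widetilde\psi$ of the $\varepsilon=0$ equation. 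Uniqueness of $\psi$ modulo translation then identifies $\widetilde\psi$ with a translate of $\psi$ and pushes $\widetilde\phi_{\varepsilon_n}$ into the neighbourhood covered by the Lyapunov--Schmidt step, producing the desired contradiction. The delicate point is that the convergence must take place in the weighted $E^2_\mu$-topology, not just in $C^0_{\mathrm{loc}}$, because the exponential weight magnifies the behaviour at $-\infty$ and the coefficient $\varepsilon^2$ in front of the highest derivative degenerates in the limit; extracting a uniform exponential decay estimate at $-\infty$, reflecting the unstable direction at $0$, is what I expect to be the hardest ingredient.
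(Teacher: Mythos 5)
Your high-level plan coincides with the paper's: a Lyapunov--Schmidt reduction at $\varepsilon=0$ around the heteroclinic $\psi$ of $x'+x-g(x(\cdot-h))=0$, followed by an a priori compactness argument that forces every positive front with small $\varepsilon$ into the neighbourhood covered by the local step. That said, three points are either different from the paper or left genuinely open.

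First, the paper does \emph{not} differentiate the map $G(\phi,\varepsilon)=\varepsilon^2\phi''-\phi'-\phi+g(\phi(\cdot-h))$ on a scale of $C^k$-type spaces. It rewrites the wave equation in the integral (fixed-point) form $x=(\mathcal{I}_\varepsilon\circ\mathcal{G})x$, where $\mathcal{I}_\varepsilon$ is the resolvent of $\varepsilon^2 D^2-D-I$, and shows that the family $\varepsilon\mapsto\mathcal{I}_\varepsilon\in\mathcal{L}(C_\mu(\R))$ is norm-continuous at $\varepsilon=0$. This side-steps the difficulty you flag: with the differential formulation, the map $E^2_\mu\to F_\mu$ you propose does not even have a bounded inverse on the complement of the kernel, because solving $\mathcal{L}y=f$ for merely continuous $f$ does not return a $C^2$ function, and uniform $C^2$ bounds on $\phi_\varepsilon$ as $\varepsilon\to 0$ are precisely what the singular perturbation destroys. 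So the ``scale of $C^k$ spaces'' remedy is not just a cosmetic change; as stated it does not close. The scale that the paper actually uses is in the decay parameter $\mu$ (the spaces $C_\mu(\R)$), and this scale enters later, not in the IFT step.

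Second, you correctly identify the key obstacle---promoting locally uniform convergence of a sequence of positive fronts to convergence in the weighted topology, which requires exponential decay at $-\infty$ \emph{uniform in $\varepsilon$}---but you do not supply a mechanism. This is where most of the paper lives: the Laplace-transform contour-shifting argument of Lemma \ref{111}, the a priori uniform estimate of Theorem \ref{111a} (where positivity is used decisively, via a supremum-of-admissible-rates argument, to pin the leading exponent at the dominant root $\lambda_1(\varepsilon)$), and the convergence-upgrading Corollary \ref{111b}. Without an analogue of these, the compactness step gives only $C^0_{\mathrm{loc}}$ convergence and the contradiction with local uniqueness cannot be drawn.

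Third, in the compactness step you assert that a limit of positive fronts is again a heteroclinic. This is not automatic: a priori the limit is only a bounded positive solution of the limit equation, and one must exclude that it oscillates around $\kappa$ without converging. The paper's Lemma \ref{16} uses the \emph{global attractivity} of $\kappa$ from hypothesis (H) (and the persistence bound from Lemma \ref{13}) precisely to rule this out; you invoke (H) only for the Fredholm property, not here. Also, strictly speaking, the kernel of $I-\mathcal{N}$ on $C_\mu(\R)$ has dimension $d(\mu)=\#\{\lambda_j:\Re\lambda_j>\mu\}$; it is one-dimensional and spanned by $\psi'$ only after choosing $\mu$ in $(\Re\lambda_2,\lambda)$, a choice you should make explicit since the rest of your scheme relies on it.

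\label{lastpage}
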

In order to apply Theorem \ref{mr}, one needs to find sufficient
conditions to ensure the global attractivity of the positive
equilibrium of (\ref{17a}). Some results in this direction were
found in Liz {\it et al.} (2005) for nonlinearities
 satisfying a
generalized Yorke condition. In particular, Corollary 2.3 of the
latter paper implies the following  \noindent
\begin{corollary} \label{nasa} Assume {\rm \bf(H)} and {\rm \bf(G)}, and that
either $\Gamma := g'(\kappa) \in [0,1]$ or \begin{equation*}
\label{gsc} \Gamma < 0 \quad {\rm and} \quad e^{- h} > - \Gamma \ln
\frac{{\Gamma}^{2} - \Gamma}{{\Gamma}^{2} + 1}.
\end{equation*}
Suppose  also that $g \in C^3(\R_+, \R_+)$ has only one critical
point $x_M$ (maximum) and  that the Schwarzian
$(Sg)(x)=g'''(x)(g'(x))^{-1}-(3/2) \left(g''(x)(g'(x))^{-1}\right)^2
$ is negative for all $x >0$, $x \not=x_M$. Then the conclusion of
Theorem \ref{mr} holds true.
\end{corollary}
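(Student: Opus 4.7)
The plan is to verify that the strengthened hypotheses of the corollary imply the full condition \textbf{(H)} of Theorem \ref{mr}, since once \textbf{(H)} and \textbf{(G)} are in hand the conclusion follows directly from Theorem \ref{mr}. Condition \textbf{(G)} is assumed explicitly (and the single critical point / negative Schwarzian hypotheses are clearly compatible with it), so only the two parts of \textbf{(H)} need to be established: hyperbolicity of the trivial equilibrium $x_2\equiv 0$ of (\ref{17a}), and global attractivity plus exponential stability of $x_1\equiv \kappa$.

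First I would dispatch the hyperbolicity of $0$. The characteristic equation at the trivial equilibrium is $\lambda+1=p\,e^{-\lambda h}$ with $p=g'(0)>1$; evaluating the real branch at $\lambda=0$ gives $1<p$, producing a real positive eigenvalue, while a short real/imaginary-part computation excludes roots on $i\R$. Hence $0$ is a hyperbolic equilibrium.

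Next I would treat the local exponential stability of $\kappa$ for (\ref{17a}) by analysing the characteristic equation $\lambda+1=\Gamma\, e^{-\lambda h}$. For $\Gamma\in[0,1]$ the estimate $|\Gamma e^{-\lambda h}|\le 1<|\lambda+1|$ on $\{\mathrm{Re}\,\lambda\ge 0\}\setminus\{0\}$ immediately locates every root strictly in the left half-plane. For $\Gamma<0$, the inequality $e^{-h}>-\Gamma\ln\bigl((\Gamma^{2}-\Gamma)/(\Gamma^{2}+1)\bigr)$ is precisely the sharp delay-dependent stability condition obtained through the classical $D$-decomposition of Hayes for equations of the form $\lambda+1-\Gamma e^{-\lambda h}=0$, so the same conclusion holds.

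The core step, and where I expect the real work to lie, is the global attractivity of $\kappa$ for equation (\ref{17a}); this is exactly what the corollary needs in order to match hypothesis \textbf{(H)}. Here I would invoke Corollary 2.3 of Liz \emph{et al.} (2005), which guarantees global attractivity for scalar delayed equations of this shape under a generalized Yorke condition on $g$. To verify that Yorke condition from the assumptions of the corollary, I would use the Singer-type mechanism: a $C^{3}$ map with a unique critical maximum $x_M$ and $(Sg)(x)<0$ on $(0,\infty)\setminus\{x_M\}$ cannot support an attracting periodic orbit other than the fixed point $\kappa$, and its iterates inherit enough monotonicity to yield the one-sided envelope estimate on $g(x)-\kappa$ required by Liz \emph{et al.} The delay-dependent bound $e^{-h}>-\Gamma\ln\bigl((\Gamma^{2}-\Gamma)/(\Gamma^{2}+1)\bigr)$ is in turn the exact numerical threshold under which that corollary applies in the case $\Gamma<0$, while the range $\Gamma\in[0,1]$ corresponds to a simpler monotone/contractive situation.

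The main obstacle is thus the transfer from a purely one-dimensional Schwarzian-plus-unimodality hypothesis on $g$ to the infinite-dimensional generalized Yorke inequality used in Liz \emph{et al.} (2005); this is delicate in general, but is exactly the passage carried out in that reference for nonlinearities of the unimodal monostable type under consideration here, so once the local stability and hyperbolicity parts are handled the corollary reduces to an application of their Corollary 2.3 followed by Theorem \ref{mr}.
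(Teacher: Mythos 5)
Your proposal takes essentially the same route as the paper: the paper's ``proof'' is a one-line citation of Corollary~2.3 of Liz \emph{et al.}~(2005), which yields the global attractivity of $\kappa$ for~(\ref{17a}) under the stated restrictions on $\Gamma=g'(\kappa)$ and the negative-Schwarzian hypothesis, after which Theorem~\ref{mr} applies directly. The extra verification you supply (hyperbolicity of the trivial equilibrium and local exponential stability of $\kappa$ via the characteristic equation) is sensible but not present in the paper because the corollary as stated nominally assumes all of \textbf{(H)}; note also that your estimate $|\Gamma e^{-\lambda h}|\le 1<|\lambda+1|$ fails at $\lambda=0$ when $\Gamma=1$, a borderline case that is already latent in the paper's interval $\Gamma\in[0,1]$.
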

Notice that Corollary \ref{nasa} applies to both the Nicholson's
blowflies equation and the Mackey-Glass equation with non-monotone
nonlinearity.

The structure of this paper is as follows: the next section
contains preliminary facts and fixes some notation. In the third
section, following  Faria {\it et al.} (2006), we realize the
Lyapunov-Schmidt reduction in a scale of Banach spaces. Section 4
contains the core lemma   of the paper. As an applications of this
lemma, we obtain an alternative proof of the existence of positive
wavefronts, see Theorem \ref{mrr13}. Finally, in Section 5 we show
that there exists exactly one wavefront for each fixed fast speed.
\section{Preliminaries}
This section contains several auxiliary results that will be
needed later. Proofs of them (excepting Lemma \ref{6}) can be
found in Liz {\it et al.} (2002) [Lemma \ref{13}], Trofimchuk {\it
et al.} (2007) [Lemma \ref{T2}], Faria \& Trofimchuk (2006)
[Lemmas \ref{char3}-\ref{8}].
%\subsection{Geometric properties of semi-wavefronts and characteristic equations }
\begin{lemma} \label{13} Assume $\mathbf{(G)}$.
If $x\not\equiv 0$ is a non negative solution of Eq. (\ref{17a}),
then
$$
\zeta_1  \leq \liminf\limits_{t \to +\infty} x(t) \leq
\limsup\limits_{t \to +\infty} x(t) \leq \zeta_2.
$$
\end{lemma}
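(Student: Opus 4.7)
The plan is to establish the two inequalities separately. The upper bound is immediate: since \textbf{(G)} gives $g(y) \le \zeta_2$ for every $y \ge 0$, the variation-of-constants formula applied to $x'(t) + x(t) = g(x(t-h))$ yields
\[
x(t) = e^{-(t-t_0)} x(t_0) + \int_{t_0}^{t} e^{-(t-s)} g(x(s-h))\,ds \le e^{-(t-t_0)} x(t_0) + \zeta_2 \bigl(1 - e^{-(t-t_0)}\bigr),
\]
and letting $t \to \infty$ produces $L := \limsup_{t \to \infty} x(t) \le \zeta_2$.

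For the lower bound I set $\ell := \liminf_{t\to\infty} x(t)$ and first assume $\ell > 0$. Pick $t_n \to \infty$ with $x(t_n) \to \ell$. Since $x$ and $x'$ are bounded, Arzel\`a--Ascoli produces (along a subsequence) a bounded solution $\phi$ of $x' = -x + g(x(\cdot - h))$ on $\R$ with $\phi(0) = \ell$ and $\phi(s) \in [\ell, L]$ for every $s$. So $0$ is a global minimum of $\phi$, forcing $\phi'(0) = 0$ and hence $g(\phi(-h)) = \ell$; write $s_\star := \phi(-h) \in [\ell, L] \subseteq (0, \zeta_2]$. Because $g$ has no interior fixed points on $(0, \kappa)$ while $g'(0) > 1$, an IVT argument gives $g(y) > y$ on $(0, \kappa)$; the invariance $g([\zeta_1, \zeta_2]) \subseteq [\zeta_1, \zeta_2]$ gives $g(y) \ge \zeta_1$ on that interval. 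If $s_\star \in (0, \kappa)$ then $\ell = g(s_\star) > s_\star \ge \ell$, impossible; if $s_\star \in [\kappa, \zeta_2]$ then $s_\star \ge \kappa > \zeta_1$ (using $\zeta_1 \le A \le \kappa/2$), hence $\ell = g(s_\star) \ge \zeta_1$. Either way $\ell \ge \zeta_1$.

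The main obstacle is ruling out $\ell = 0$, which should be incompatible with the exponential instability of the trivial equilibrium encoded in $g'(0) = p > 1$. First, an integral-form backward-propagation argument (if $x(t_0) = 0$ then $0 = x'(t_0) = g(x(t_0 - h))$ forces $x(t_0 - h) = 0$; iterate plus uniqueness) shows $x(t) > 0$ for all large $t$. Fix $q \in (1, p)$ and $\delta > 0$ with $g(y) \ge qy$ on $[0, \delta]$, and consider
\[
z(t) := x(t) + q \int_{t-h}^{t} x(s)\,ds;
\]
on any interval where $x(\cdot - h) \le \delta$ one finds $z'(t) = -x(t) + g(x(t-h)) + q\bigl(x(t) - x(t-h)\bigr) \ge (q - 1) x(t) \ge 0$. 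If $x(t) \to 0$, then $z$ is eventually nondecreasing and tends to $0$, forcing $z \equiv 0$ and thus $x \equiv 0$ on a right half-line, contradicting the positivity above. The truly delicate case is $\ell = 0 < L$ (oscillation of $x$ across the threshold $\delta$); here I would pass to the compact, invariant, connected $\omega$-limit set $\Omega \subset C([-h, 0])$, use that $\Omega$ contains the zero function as well as some nontrivial $\psi \ge 0$ to extract a full bounded nonnegative orbit in $\Omega$ that comes arbitrarily close to $0$, and then apply either the $z$-functional along its small portion or a direct sub-solution comparison with the unstable exponential mode $A e^{\lambda t}$ of the linearisation at $0$ (with $\lambda > 0$ the positive root of $\lambda + 1 = q e^{-\lambda h}$). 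Reconciling this oscillation with the exponential instability of $0$ is where the careful use of \textbf{(G)} is essential and where the principal technical difficulty lies.
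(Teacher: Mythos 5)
The paper does not itself prove Lemma~\ref{13}; it is imported from Liz, Mart\'inez \& Trofimchuk (2002), so there is no internal proof to compare your argument against. That said, your attempt is only partially complete. The upper bound, the treatment of $\ell := \liminf_{t\to\infty} x(t)>0$ via a full limit solution achieving its minimum, and the $z$-functional argument for the monotone case $x(t)\to 0$ are all sound. But you explicitly concede that you do not close the oscillatory case $\ell = 0 < L := \limsup_{t\to\infty}x(t)$, and the sketch you offer there is not an argument: producing a nontrivial orbit inside the $\omega$-limit set lying on the stable set of $\{0\}$ (rather than merely passing near $0$) needs a Butler--McGehee type lemma, which in turn needs $\{0\}$ to be isolated as an invariant set in the nonnegative cone --- a fact that itself must be proved --- and the functional $z$ is monotone only on intervals where $x(\cdot-h)\le\delta$, so it cannot be "applied along a small portion" of an orbit that also leaves the $\delta$-ball. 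This is a genuine gap, and it is precisely the heart of the lemma.

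The gap can in fact be closed by a short running-minimum argument, without any dynamical-systems machinery. Once one has $x(t)>0$ for $t\ge T_0$ and $\limsup_{t\to\infty}x(t)\le\zeta_2$, suppose for contradiction that $\liminf_{t\to\infty}x(t)=0$. Choose $s_n\to\infty$ with $x(s_n)\to 0$ and let $\tau_n$ be a minimizer of $x$ over $[T_0,s_n]$. Since $x>0$ and continuous, $\tau_n\to\infty$, and $x(\tau_n)\le x(s_n)\to 0$; moreover $x'(\tau_n)\le 0$ (interior minimum or right endpoint), and $x(\tau_n-h)\ge x(\tau_n)$ once $\tau_n\ge T_0+h$. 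The equation gives $g(x(\tau_n-h))=x'(\tau_n)+x(\tau_n)\le x(\tau_n)\to 0$, and since $g>0$ on a neighbourhood of $(0,\zeta_2]$ this forces $x(\tau_n-h)\to 0$. Hence for large $n$ one has $x(\tau_n-h)\le\delta$, where $\delta$ is chosen so that $g(y)\ge q y$ on $[0,\delta]$ for some $q\in(1,p)$; but then $x'(\tau_n)=-x(\tau_n)+g(x(\tau_n-h))\ge (q-1)x(\tau_n)>0$, contradicting $x'(\tau_n)\le 0$. This yields $\ell>0$, and your limit-solution argument then gives $\ell\ge\zeta_1$.
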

\begin{lemma} \label{T2} Assume $\mathbf{(G)}$.
Consider wavefront $u(x,t) = \phi(\nu \cdot  x +ct),\  \|\nu\| =1,$
to Eq. (\ref{17}). Then there exists a unique $\tau$ such that
$\phi(\tau)= A, \ \phi'(s) > 0$ for all $s \leq \tau$.
\end{lemma}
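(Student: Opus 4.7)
The statement packages two things: existence of a point $\tau$ where $\phi$ first reaches the level $A$ together with strict monotonicity of $\phi$ up to that point, and uniqueness of such a $\tau$. Uniqueness is immediate: if $\tau_1<\tau_2$ both satisfied the conditions, then $\phi'>0$ on $[\tau_1,\tau_2]$ would force $\phi(\tau_2)>\phi(\tau_1)$, contradicting $\phi(\tau_1)=\phi(\tau_2)=A$. So the whole content is existence, and my plan is to take $\tau$ to be the first crossing of level $A$ and prove that $\phi$ is strictly increasing on $(-\infty,\tau]$.

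First I would pin down the behavior of $\phi$ at $-\infty$ by linearizing $(\ref{twe0})$ at the trivial equilibrium; the characteristic function is $\chi(\lambda)=\varepsilon^2\lambda^2-\lambda-1+p\,e^{-\lambda h}$ with $p=g'(0)>1$. Since $\chi(0)=p-1>0$ and $\chi(\lambda)\to+\infty$ as $\lambda\to+\infty$, the admissibility of the wavefront speed $c=1/\varepsilon$ furnishes a smallest positive real root $\lambda_0$; the standard asymptotic theory for bounded positive solutions of linear autonomous DDEs (Mallet-Paret--Frenzen-Sibuya) then forces $\phi(t)\sim Ce^{\lambda_0 t}$ as $t\to-\infty$ with $C>0$. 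In particular $\phi,\phi'>0$ for all sufficiently negative $t$. The intermediate value theorem, using $\phi(-\infty)=0<A<\kappa=\phi(+\infty)$, produces a first point $\tau:=\inf\{t\in\R:\phi(t)=A\}$, finite and with $\phi(t)<A$ on $(-\infty,\tau)$.

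To show $\phi'>0$ on $(-\infty,\tau]$, I would argue by contradiction, taking $t^*\le\tau$ to be the smallest zero of $\phi'$. Then $\phi'>0$ on $(-\infty,t^*)$, $\phi(t^*-h)<\phi(t^*)\le A$, and $\phi'(t^*-h)>0$. Differentiating $(\ref{twe0})$ at $t^*$ yields $\varepsilon^2\phi'''(t^*)=\phi''(t^*)-g'(\phi(t^*-h))\phi'(t^*-h)$; because $\phi(t^*-h)<A$ ensures $g'(\phi(t^*-h))>0$, the degenerate case $\phi''(t^*)=0$ is ruled out by the Taylor expansion (which would force $\phi'<0$ on both sides of $t^*$). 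Hence $\phi''(t^*)<0$ strictly, $t^*$ is a strict local maximum, and substituting into $(\ref{twe0})$ gives the strict inequality $\phi(t^*)<g(\phi(t^*-h))$.

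The main obstacle is to convert this local maximum information into a global contradiction. My approach is to invert the linear part $\varepsilon^2D^2-D-\mathrm{id}$, using the real roots $\mu_\pm$ of $\varepsilon^2\mu^2-\mu-1=0$, and rewrite the profile equation as the convolution
\begin{equation*}
\phi(t)=\int_{\R}\tilde K(t-r)\,g(\phi(r))\,dr,
\end{equation*}
where $\tilde K$ is constructed from $e^{\mu_\pm u}$ translated by $h$, is strictly positive, and satisfies $\int_{\R}\tilde K=1$. On $(-\infty,t^*]$ all values of $\phi$ lie in $[0,A]$, where $g$ is monotone increasing and $g(x)>x$ on $(0,\kappa)$, so the equation behaves as in the monotone case. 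A careful comparison between the convolution identity at $t=t^*$ and the inequality $\phi(t^*)<g(\phi(t^*-h))$, exploiting the exponential decay $\phi(t)\sim Ce^{\lambda_0 t}$ at $-\infty$ to control the tails, should then produce the contradiction. The technically delicate part, which I expect to be the hardest, is making this comparison quantitative: it requires an estimate on the concentration of $\tilde K$ together with control on how far $t^*-h$ can lie from $t^*$, in the spirit of the monotone-case arguments in Faria \& Trofimchuk (2006).
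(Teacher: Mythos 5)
Your preliminary steps are sound: the uniqueness observation, the exponential asymptotics at $-\infty$ forcing $\phi,\phi'>0$ for $t\ll 0$ (this needs the leading real root of the characteristic function to dominate, which positivity of $\phi$ guarantees), the introduction of the first critical point $t^*\le\tau$, the strictness $\phi''(t^*)<0$ via the differentiated equation, and the resulting inequality $\phi(t^*)<g(\phi(t^*-h))$. Note also that the paper does not prove this lemma; it defers to Trofimchuk, Tkachenko \& Trofimchuk (2007), so there is no in-paper argument to compare against.

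The genuine gap is the final step, and you acknowledge it yourself (``should then produce the contradiction''). The inequality $\phi(t^*)<g(\phi(t^*-h))$ is \emph{not} contradictory on its own: since $\phi(t^*-h)<\phi(t^*)<A$ and $g$ is increasing on $[0,A]$ with $g(x)>x$ on $(0,\kappa)$, all it implies is $\phi(t^*)<g(\phi(t^*))$, which is automatic. Moreover, the convolution route you sketch does not obviously close. The identity $\phi(t)=\int_{\R}\tilde K(t-r)\,g(\phi(r))\,dr$ is just a restatement of the travelling-wave equation; evaluating it at $t^*$ involves values $g(\phi(r))$ for $r>t^*-h$, including $r>t^*$ where $\phi$ exceeds $A$ and $g$ is no longer monotone, so the assertion that ``the equation behaves as in the monotone case on $(-\infty,t^*]$'' is not accurate. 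Controlling tails at $-\infty$ via $\phi\sim Ce^{\lambda_0 t}$ does not help, because the uncontrolled contribution sits near and to the right of $t^*$; and there is no Jensen-type convexity of $g\circ\phi$ to compare $\int\tilde K\,g(\phi)$ against $g(\phi(t^*-h))$. To close the argument you need an additional idea that uses the behaviour of $\phi$ \emph{after} $t^*$: for instance, following the trajectory to the first local minimum $t^{**}>t^*$ (which must exist since $\phi(t^*)<A<\kappa=\phi(+\infty)$), extracting from the equation at $t^{**}$ the reverse inequality $\phi(t^{**})\ge g(\phi(t^{**}-h))$, and then comparing the two to force a contradiction, or alternatively a sliding-type monotonicity argument applied to $w=\phi'$ in the differentiated equation $\varepsilon^2w''-w'-w+g'(\phi(\cdot-h))w(\cdot-h)=0$ on the interval where the coefficient $g'(\phi(\cdot-h))$ is still positive. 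As written, the proposal stops short of the one step that actually carries the content of the lemma.
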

\begin{lemma} \label{char3}
Suppose that $p> 1$ and  $h >0$. Then the characteristic equation
\begin{equation}\label{char}
z = - 1 + p\exp (-z h)
\end{equation}
has only one real root $0 <  \lambda < p-1$. Moreover, all roots
$\lambda, \lambda_j, \ j =2,3,\dots $ of (\ref{char}) are simple and
we can enumerate them in such a way that $\lambda > \Re \lambda_2 =
\Re \lambda_3 \geq \dots$
\end{lemma}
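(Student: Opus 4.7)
The plan is to analyze the entire function $F(z) := z + 1 - p e^{-zh}$ whose zeros are exactly the roots of (\ref{char}), establishing in turn the existence and location of the real root, the simplicity of every root, and the dominance/conjugate pairing of $\lambda$ over the remaining roots.

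First, for the real root, I would study $F$ on $\R$. Since $F'(x) = 1 + ph e^{-xh} > 0$, the restriction $F|_\R$ is strictly increasing. Because $p>1$ we have $F(0) = 1 - p < 0$, while $F(p-1) = p\bigl(1 - e^{-(p-1)h}\bigr) > 0$ thanks to $(p-1)h > 0$. The intermediate value theorem then produces the unique real root $\lambda \in (0,p-1)$.

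For simplicity of an arbitrary root $z_0$, suppose $F(z_0)=F'(z_0)=0$. From $F'(z_0)=0$ one gets $pe^{-z_0h} = -1/h$, and substituting into $F(z_0)=0$ yields $z_0 = -1 - 1/h \in \R$. But then the only real root is $\lambda>0$, a contradiction, and moreover $pe^{-\lambda h} = \lambda + 1 > 0 \neq -1/h$. Hence $F'$ cannot vanish at a root and every root is simple.

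For the dominance, let $z = x + iy$ with $y \neq 0$ be a root. Separating real and imaginary parts gives
\begin{equation*}
x + 1 = p e^{-xh}\cos(yh), \qquad y = -p e^{-xh}\sin(yh).
\end{equation*}
If $x \geq \lambda$ then, since $s \mapsto p e^{-sh} - s - 1$ is strictly decreasing on $\R$ and vanishes at $\lambda$,
\begin{equation*}
x+1 = pe^{-xh}\cos(yh) \le pe^{-xh} \le pe^{-\lambda h} = \lambda + 1,
\end{equation*}
forcing $x = \lambda$ and $\cos(yh) = 1$. The latter means $\sin(yh)=0$, so the second identity gives $y = 0$, a contradiction. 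Therefore $\Re z < \lambda$ for every non-real root.

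It remains to see that among the non-real roots there exists one of maximal real part. For any root $z = x+iy$, taking moduli in (\ref{char}) yields $|z+1| = pe^{-xh}$, so $x = -h^{-1}\ln(|z+1|/p)$; hence $|z| \to \infty$ forces $x \to -\infty$, and therefore every vertical strip $\{a \leq \Re z \leq b\}$ contains only finitely many roots of the entire function $F$. In particular the set $\{\Re \lambda_j : \lambda_j \notin \R\}$ attains a maximum, strictly smaller than $\lambda$. Since $F$ has real coefficients, non-real roots come in complex conjugate pairs, so a maximizer $\lambda_2$ comes paired with $\lambda_3 = \overline{\lambda_2}$ satisfying $\Re\lambda_2 = \Re\lambda_3$; enumerating the remaining roots by non-increasing real part yields the claimed ordering. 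The main (mild) obstacle is the last step, namely ruling out an accumulation of real parts near $\lambda$: this is handled cleanly by the asymptotic estimate $x = -h^{-1}\ln(|z+1|/p)$ derived above.
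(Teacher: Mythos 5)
Your proof is correct and self-contained. Note that the paper does not actually prove this lemma — it defers to Faria \& Trofimchuk (2006) for Lemmas \ref{char3}--\ref{8} — so there is no in-text argument to compare against, but your route (strict monotonicity of $F(x)=x+1-pe^{-xh}$ on $\R$ for the unique real root; showing $F(z_0)=F'(z_0)=0$ forces $z_0=-1-1/h$, contradicting both reality and sign; the comparison $x+1=pe^{-xh}\cos(yh)\le pe^{-xh}\le pe^{-\lambda h}=\lambda+1$ for dominance; and $x=-h^{-1}\ln(|z+1|/p)$ to get finitely many roots per vertical strip and hence a well-defined maximizer and conjugate pairing) is exactly the standard elementary treatment that such references give.

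One small remark: in the dominance step you invoke ``$s\mapsto pe^{-sh}-s-1$ is strictly decreasing'' but the inequality you actually use is just $e^{-xh}\le e^{-\lambda h}$ for $x\ge\lambda$, $h>0$; the combined function's monotonicity is not needed there, only the identity $pe^{-\lambda h}=\lambda+1$. This is harmless. Also, the lemma's ``$\dots$'' tacitly asserts the root set is infinite; you do not prove this, but it is not needed for the uniqueness, simplicity, and dominance claims, and follows from standard quasi-polynomial theory in any case.
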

\noindent Everywhere in the sequel, $\lambda_j$ stands for a root
of (\ref{char}). Notice that we write $\lambda$ instead of
$\lambda_1$.
\begin{lemma} \label{psi} Assume {\rm \bf(H)}, {\rm \bf(G)} and let $\lambda$ be as in Lemma \ref{char3}.
Then (\ref{17a}) has a unique (modulo translations) positive
heteroclinic solution $\psi$. Moreover, $\psi(t-t_0) = \exp
(\lambda t) + O(\exp((2\lambda- \delta)t)), \ t \to - \infty,$ for
each $\delta
> 0$ and some $t_0 \in \R$.
\end{lemma}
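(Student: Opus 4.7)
The plan is to establish the three claims in the lemma in turn: existence of a positive heteroclinic $\psi$ of (\ref{17a}), its asymptotic normalization at $-\infty$, and uniqueness modulo translations. For existence in this $\varepsilon = 0$ regime I would follow Faria \& Trofimchuk (2006) and rewrite (\ref{17a}) as the fixed-point problem
\begin{equation*}
x(t) = \int_{-\infty}^t e^{s - t} g(x(s - h))\, ds
\end{equation*}
in a weighted $L^\infty$ space enforcing the decay $O(e^{\lambda t})$ at $-\infty$, applying Schauder's theorem on the order interval between sub/super-solutions built from monotone envelopes of $g$ (the sandwich technique of Trofimchuk \emph{et al.} (2007), well suited to the non-monotone case). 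Global attractivity of $\kappa$ in (H), combined with Lemma \ref{13}, forces the resulting positive bounded solution to reach $\kappa$ at $+\infty$.

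For the asymptotic expansion, near $0$ we have $g(x) = p x + O(x^2)$, so the linearization of (\ref{17a}) is $y' = -y + p\, y(\cdot - h)$ with characteristic equation (\ref{char}). Using exponential dichotomies for linear autonomous delay equations together with the positivity of $\psi$, one first obtains $\psi(t) \sim C e^{\lambda t}$ at $-\infty$ with $C > 0$, and translation by $t_0 = \lambda^{-1} \log C$ normalizes $C$ to $1$. Writing $\psi(t - t_0) = e^{\lambda t} + R(t)$ and substituting into (\ref{17a}), $R$ satisfies
\begin{equation*}
R'(t) + R(t) - p R(t - h) = g(\psi(t - t_0 - h)) - p\, \psi(t - t_0 - h) = O(e^{2 \lambda t}).
\end{equation*}
Inverting this linear operator in a weighted $L^\infty$ space with weight $e^{-(2\lambda - \delta) t}$ (the slack $\delta > 0$ accommodates any coincidence of $2\lambda$ with some $\Re \lambda_j$, $j \geq 2$) then yields $R(t) = O(e^{(2\lambda - \delta) t})$ at $-\infty$.

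For uniqueness, suppose $\psi_1, \psi_2$ are two positive heteroclinics; by the previous step, after suitable translations both satisfy $\psi_i(t) = e^{\lambda t} + O(e^{(2\lambda - \delta) t})$ at $-\infty$, so $d = \psi_1 - \psi_2 = O(e^{(2\lambda - \delta) t})$ at $-\infty$ and solves the linear non-autonomous equation
\begin{equation*}
d'(t) + d(t) = G(t)\, d(t - h), \quad G(t) = \int_0^1 g'\bigl(s \psi_1(t - h) + (1 - s) \psi_2(t - h)\bigr)\, ds,
\end{equation*}
with $G(t) \to p$ at $-\infty$ and $G(t) \to g'(\kappa)$ at $+\infty$. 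By the Fredholm structure underlying Section 3 of the paper (the operator $\mathcal{L} v = v' + v - g'(\psi(\cdot - h))\, v(\cdot - h)$ is Fredholm on a suitable two-sided weighted space, with kernel spanned by $\psi' \sim \lambda e^{\lambda t}$ at $-\infty$), any bounded solution of the $d$-equation that decays strictly faster than $e^{\lambda t}$ at $-\infty$ must vanish identically, forcing $\psi_1 \equiv \psi_2$ after translation.

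I expect uniqueness to be the main obstacle: for non-monotone $g$ the sliding method of Berestycki-Nirenberg is not available, so one must exploit both the strong attractivity in (H) (which forces the correct behavior at $+\infty$) and the hyperbolicity at $0$ (which puts (\ref{char}) in general position, so that $\mathcal{L}$ is indeed Fredholm on the chosen weighted space). The asymptotic step is technically routine but demands a careful choice of function spaces to dispose of the possible resonance $2\lambda \in \{\Re \lambda_j : j \geq 2\}$.
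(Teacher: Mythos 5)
Your proposal assembles the right toolkit — unstable-manifold / sub-super solution existence, Mallet-Paret asymptotics at $-\infty$, positivity to kill the complex exponential modes, and a normalization that turns uniqueness into a vanishing result for the difference $d=\psi_1-\psi_2$. This is consistent with the source the paper cites for Lemma~\ref{psi} (Faria \& Trofimchuk 2006) and with the technique the paper actually displays in its one explicitly proved preliminary, Lemma~\ref{6}. Two remarks: in the asymptotic step, the ``resonance $2\lambda\in\{\Re\lambda_j\}$'' you guard against cannot occur, since Lemma~\ref{char3} gives $\lambda=\max_j\Re\lambda_j<2\lambda$; the slack $\delta$ is really there for the standard Mallet-Paret bookkeeping, not to dodge a resonance.

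The uniqueness step contains a genuine gap as written. You argue that because $\mathcal{L}v=v'+v-g'(\psi(\cdot-h))v(\cdot-h)$ is Fredholm with one-dimensional kernel $\langle\psi'\rangle$, any solution of the $d$-equation decaying faster than $e^{\lambda t}$ must vanish. But $d$ does not solve $\mathcal{L}d=0$: it solves $d'+d=G(t)d(\cdot-h)$ with $G(t)=\int_0^1 g'\bigl(s\psi_1(t-h)+(1-s)\psi_2(t-h)\bigr)\,ds$, which coincides with $g'(\psi(\cdot-h))$ only after uniqueness is already known. So Lemma~\ref{6} and the Section~3 Fredholm machinery cannot be invoked directly for $d$ (and as a side remark, the kernel of $I-\mathcal{N}$ on $C_\mu(\R)$ has dimension $d(\mu)$, which exceeds $1$ for small $\mu$ when some $\Re\lambda_j>0$, $j\geq 2$; it is one-dimensional only for $\mu$ just below $\lambda$). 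The correct way to close the argument is the bootstrap the paper itself carries out at the end of the proof of Lemma~\ref{6}: since $G(-\infty)=p$, $G(+\infty)=g'(\kappa)$ and $G(t)-p=O(e^{\lambda t})$ at $-\infty$, rewrite $d'+d-pd(\cdot-h)=(G(t)-p)d(\cdot-h)$ and iterate Mallet-Paret's Proposition~7.1 to push $d=O(e^{(2\lambda-\delta)t})$ up to $d=O(e^{k\lambda t})$ for every $k$ (no eigenvalues of (\ref{char}) lie in $\Re z\geq 2\lambda-\delta$, so the eigensolution term is always trivial); then $d$ is a small solution at $-\infty$ of an equation whose delayed coefficient has the nonzero limit $p$, hence $d\equiv 0$. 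This is fixable with the tools you already name, but it is the substantive step, and as stated your justification rests on a Fredholm result for a different operator.
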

\begin{lemma} \label{lemma}
Let $\{\lambda_{\alpha}(\varepsilon), \alpha \in A\}$, where $\N
\cup \{ \infty \}\subset A$, denote the (countable) set of roots to
the equation
\begin{equation}\label{char2}
\varepsilon^2z^2-z-1+p\exp(-z h)=0.
\end{equation}
If $p> 1, \ h >0, \ \varepsilon  \in (0,1/(2\sqrt{p-1}))$ then
(\ref{char2}) has exactly two real roots $\lambda_1(\varepsilon),
\lambda_{\infty}(\varepsilon)$ such that
$$0<\lambda<  \lambda_1(\varepsilon) < 2(p-1) <
\varepsilon^{-2}-2(p-1) < \lambda_{\infty}(\varepsilon) <
\varepsilon^{-2}+1.$$ Moreover: (i) there exists an interval
$\mathcal{O}=\mathcal{O}(p,h) \ni 0$ such that, for every
$\varepsilon \in \mathcal{O}$, all roots
$\lambda_{\alpha}(\varepsilon), \alpha \in A$ of (\ref{char2}) are
simple  and the functions $\lambda_{\alpha}: \mathcal{O} \to \C$
are continuous; (ii) we can enumerate $\lambda_j(\varepsilon), j
\in \N$, in such a way that there exists $\lim_{\varepsilon \to
0+}\lambda_j(\varepsilon) =\lambda_j$ for each $j \in \N$, where
$\lambda_j \in \C$ are the roots of (\ref{char}), with
$\lambda_1=\lambda$; (iii) for all sufficiently small
$\varepsilon$, every vertical strip $\xi \leq \Re z \leq 2(p-1)$
contains only a finite set of $m(\xi)$ roots (if $\xi \not\in
\{\Re \lambda_j, \ j \in \N\}$, then $m(\xi)$ does not depend on
$\varepsilon$) $\lambda_1(\varepsilon), \dots,
\lambda_{m(\xi)}(\varepsilon)$ to (\ref{char2}), while the
half-plane $\Re z > 2(p-1)$ contains only the root $
\lambda_\infty(\varepsilon)$.
\end{lemma}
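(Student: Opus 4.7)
The plan is to view $f_\varepsilon(z):=\varepsilon^2 z^2 - z - 1 + p\exp(-zh)$ as a small perturbation of $f_0(z) := -z - 1 + p\exp(-zh)$, whose roots are described by Lemma \ref{char3}.

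First I would handle the real roots. On $\R$, $f_\varepsilon''(z) = 2\varepsilon^2 + ph^2e^{-zh} > 0$, so $f_\varepsilon|_\R$ is strictly convex; combined with $f_\varepsilon(\pm\infty)=+\infty$, this forces at most two real roots, each simple. Using $\varepsilon^2 < 1/(4(p-1))$ together with $f_0(\lambda)=0$, a direct calculation yields
\begin{equation*}
f_\varepsilon(\lambda) = \varepsilon^2\lambda^2 > 0, \quad f_\varepsilon(2(p-1)) < 0, \quad f_\varepsilon(\varepsilon^{-2}-2(p-1)) < 0, \quad f_\varepsilon(\varepsilon^{-2}+1) > 0,
\end{equation*}
so the intermediate value theorem produces the two real roots $\lambda_1(\varepsilon)\in(\lambda,2(p-1))$ and $\lambda_\infty(\varepsilon)\in(\varepsilon^{-2}-2(p-1),\varepsilon^{-2}+1)$.

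For (i) and (ii), the implicit function theorem at each simple root $\lambda_j$ of $f_0$ (simplicity from Lemma \ref{char3}, where $f_0'(\lambda_j) = -1 - h(\lambda_j+1)\neq 0$) yields a unique analytic branch $\lambda_j(\varepsilon)$ with $\lambda_j(0)=\lambda_j$. By Hurwitz's theorem, on any fixed compact $K$ on whose boundary $f_0$ does not vanish, the roots of $f_\varepsilon$ in $K$ for small $\varepsilon$ are precisely these branches and remain simple. The delicate point is to secure a single $\mathcal{O}$ on which all infinitely many non-real branches are simultaneously simple; I would rely on the asymptotic location $\lambda_j \sim -h^{-1}\log(2\pi|j|/h) + 2\pi ij/h$ and the growth $|f_0'(\lambda_j)|=|1+h(\lambda_j+1)|\to\infty$ to obtain a quantitative IFT whose radius in $\varepsilon$ does not shrink as $|j|\to\infty$.

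For (iii), splitting $z=x+iy$ in $f_\varepsilon(z)=0$ gives
\begin{equation*}
\varepsilon^2 y^2 + pe^{-xh}(1-\cos(yh)) = f_\varepsilon(x), \qquad y(1-2\varepsilon^2 x) = -pe^{-xh}\sin(yh).
\end{equation*}
The first identity forces $f_\varepsilon(x)\geq 0$ at every root, so by the convexity analysis no root has $\Re z \in(\lambda_1(\varepsilon),\lambda_\infty(\varepsilon))$. Hence any root with $\Re z > 2(p-1)$ satisfies $\Re z\geq\lambda_\infty(\varepsilon)>\varepsilon^{-2}-2(p-1)$, on which $|1-2\varepsilon^2 x|\geq 1-4\varepsilon^2(p-1)>1/2$ for small $\varepsilon$. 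If $y\neq 0$, the second identity combined with $|\sin(yh)|\leq h|y|$ forces $|1-2\varepsilon^2 x|\leq phe^{-xh}\leq phe^{-(\varepsilon^{-2}-2(p-1))h}$, which is exponentially small in $1/\varepsilon^2$, a contradiction; thus $\lambda_\infty(\varepsilon)$ is the only root in $\{\Re z>2(p-1)\}$. For the strip $\xi\leq\Re z\leq 2(p-1)$ with $\xi\notin\{\Re\lambda_j\}$, the imaginary identity also yields $|y|\leq pe^{-\xi h}/(1-4\varepsilon^2(p-1)) \leq 2pe^{-\xi h}$ uniformly in small $\varepsilon$, so all roots in the strip lie in a fixed compact rectangle, and Hurwitz identifies their count with the (finite) number of roots of $f_0$ in $\Re z\geq\xi$.

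The main obstacle I anticipate is upgrading the pointwise (in $j$) implicit function theorem to a uniform statement valid on a single $\mathcal{O}$ for all $\alpha\in A$ simultaneously; once that is in place, the real-root bracketing and the strip/half-plane structure reduce to elementary sign estimates via the real/imaginary decomposition of $f_\varepsilon$.
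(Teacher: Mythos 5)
The real-root bracketing and part (iii) of your argument are correct; in particular the observation that $\Re f_\varepsilon(z)=0$ forces $f_\varepsilon(\Re z)\geq 0$, hence no root has real part strictly between $\lambda_1(\varepsilon)$ and $\lambda_\infty(\varepsilon)$, is a clean way to settle the half-plane and strip structure. (The paper itself gives no proof here; it cites Faria \& Trofimchuk (2006) for Lemmas \ref{char3}--\ref{8}.)

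The genuine gap is exactly the one you flag in (i), and the route you sketch to close it does not work. Writing $\mu=\varepsilon^2$, the first-order displacement of the branch is $\lambda_j(\mu)-\lambda_j\approx -\mu\lambda_j^2/f_0'(\lambda_j)$, and with $f_0'(\lambda_j)=-1-h(\lambda_j+1)$ this has magnitude $\sim \mu|\lambda_j|/h$. It \emph{grows} with $j$, while the spacing of the $\lambda_j$ stays $\sim 2\pi/h$; a Newton--Kantorovich estimate then gives an admissible radius $\mu\lesssim |\lambda_j|^{-1}$, i.e. $\varepsilon\lesssim |\lambda_j|^{-1/2}\to 0$. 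So the growth $|f_0'(\lambda_j)|\to\infty$ you invoke is overwhelmed by $|\lambda_j^2|$ and cannot yield a single $\mathcal{O}$. Moreover, even if it did, it would only cover roots traced back to some $\lambda_j$; the simplicity assertion in (i) concerns \emph{all} roots of (\ref{char2}), including those with very negative real part that lie outside every fixed compact.

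A cleaner route to uniform simplicity avoids branch tracking entirely: eliminate $pe^{-zh}$ between $f_\varepsilon(z)=0$ and $f_\varepsilon'(z)=0$ to obtain
\begin{equation*}
h\varepsilon^2 z^2+(2\varepsilon^2-h)z-(1+h)=0,
\end{equation*}
so for each $\varepsilon$ there are at most two candidate double roots $z_\pm(\varepsilon)$. As $\varepsilon\to 0$, $z_-(\varepsilon)\to -(1+h)/h$ where $f_0=1/h+pe^{1+h}\neq 0$, and $\varepsilon^2 z_+(\varepsilon)\to 1$ with $f_\varepsilon(z_+(\varepsilon))\to -1/h\neq 0$; by continuity neither is a zero of $f_\varepsilon$ for $|\varepsilon|$ small, so all roots are simple on a fixed $\mathcal{O}$. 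Simplicity then gives continuous (indeed analytic) local branches everywhere, and the pointwise Hurwitz argument you already have supplies (ii). I would replace your uniform-IFT plan with this elimination step.
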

\noindent Assume {\rm \bf(H)}, {\rm \bf(G)}, and let  $\psi$  be
the positive heteroclinic solution from Lemma \ref{psi}. For a
fixed $\mu\geq  0$,  we set $ \#\{\lambda_j: \mu < \Re{\lambda_j
\}}: = d(\mu)$ and  $\|x\|^+ = \sup_{\R_+}|x(s)|$, $\|x\|^-_{\mu}
= \sup_{\R_-}e^{-\mu s}|x(s)|$, $|x|_{\mu} =
\max\{\|x\|^+,\|x\|^-_{\mu}\}$. Consider the  Banach space
$$
C_{\mu}(\R) = \{x \in C(\R,\R): \|x\|^-_{\mu} < \infty, \ x(-\infty)
=0, \ {\rm \ and \ } x(+\infty) \ {\rm is \ finite } \},
$$ equipped with the norm $|x|_{\mu}$. We will need the operators $\mathcal{G},
\mathcal{I}, \mathcal{I}_{\varepsilon},
\mathcal{I}^+_{\varepsilon}, \mathcal{I}^-_{\varepsilon},
\mathcal{N}: C_{\mu}(\R) \to C_{\mu}(\R)$, where $
(\mathcal{G}x)(t) = g(x(t))$ is the Nemitski operator, $
\mathcal{I}= \mathcal{I}^-_{0}, \mathcal{I}^+_{0}=0$, $
\mathcal{I}_{\varepsilon}
=\sigma^{-1}(\varepsilon)(\mathcal{I}^+_{\varepsilon}+
\mathcal{I}^-_{\varepsilon}), \ \sigma(\varepsilon) :=
\sqrt{1+4\varepsilon^2},$  and
$$ (\mathcal{I}^+_{\varepsilon}x)(t)=
 \int^{+\infty}_te^{\frac{(1+
\sigma(\varepsilon))(t-s)}{2\varepsilon^2}}x(s-h)ds,  \
(\mathcal{I}^-_{\varepsilon}x)(t)=
\int_{-\infty}^te^{\frac{-2(t-s)}{1+ \sigma(\varepsilon)}}x(s-h)ds,
$$
$$  \ (\mathcal{N}x)(t) =
\int^t_{-\infty}e^{-(t-s)}q(s)x(s-h)ds, \ q(s) : = g'(\psi(s-h)).
$$
Since $g'(x) = p+ O(x), \ x\to 0,$ and $\psi(t) = O(\exp(\lambda
t)), \ t \to -\infty$, we obtain that
$$
q(t) = p + \epsilon(t), \ \epsilon(t)=O(\exp(\lambda t)), \ t \to
-\infty;  \ {\rm and }\  q(-\infty) =  p > 1,\ q(\infty) =
g'(\kappa).$$ Observe that $\mathcal{I}^{\pm}_{\varepsilon},
\mathcal{N}$ are well defined: e.g. $(\mathcal{N}x)(+\infty) =
g'(\kappa)x(+\infty)$ and, for $t \leq h$,
$$
|(\mathcal{N}x)(t)| \leq
\int^t_{-\infty}e^{-(t-s)}|q(s)|\|x\|^-_{\mu}e^{\mu(s-h)}ds \leq
\frac{\|x\|^-_{\mu}\sup_{t \le h}|q(t)|}{1+\mu}e^{\mu (t-h)}.
$$
\begin{lemma}  \label{8}
Operator families $\mathcal{I}^{\pm}_{\varepsilon}:
(-1/\sqrt{\mu}, 1/\sqrt{\mu}) \to \mathcal{L}(C_{\mu}(\R)),\ \mu
\geq 0,$ are continuous in the operator norm. In particular,
$\mathcal{I}_{\varepsilon} \to \mathcal{I}$ as $\varepsilon \to
0$.
\end{lemma}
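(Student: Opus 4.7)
My plan is to obtain continuity by direct estimation of the integral kernels, exploiting the smooth dependence of $\sigma(\varepsilon)=\sqrt{1+4\varepsilon^2}$ on $\varepsilon$. Set $\tilde\alpha(\varepsilon)=(1+\sigma(\varepsilon))/(2\varepsilon^2)$ and $\beta(\varepsilon)=2/(1+\sigma(\varepsilon))$ so that the kernels of $\mathcal{I}^+_\varepsilon$ and $\mathcal{I}^-_\varepsilon$ are $e^{-\tilde\alpha(\varepsilon)(s-t)}$ on $\{s>t\}$ and $e^{-\beta(\varepsilon)(t-s)}$ on $\{s<t\}$, respectively (up to the shift $x\mapsto x(\,\cdot\,-h)$). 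The elementary inequality $\sigma(\varepsilon)\geq 1$ yields $\tilde\alpha(\varepsilon)>1/\varepsilon^2$, so the prescribed range $|\varepsilon|<1/\sqrt\mu$ guarantees $\tilde\alpha(\varepsilon)>\mu$, which, together with $\beta(\varepsilon)>0$, makes all weighted integrals converge. For the boundedness step, I would estimate $|(\mathcal{I}^\pm_\varepsilon x)(t)|$ for $x\in C_\mu(\R)$ by splitting the integration at $s=h$: on $s\geq h$ use $|x(s-h)|\leq\|x\|^+$, while on $s\leq h$ use $|x(s-h)|\leq\|x\|^-_\mu e^{\mu(s-h)}$. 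Elementary integration produces explicit bounds of the form $\|\mathcal{I}^+_\varepsilon\|_{\mathcal{L}(C_\mu(\R))}\leq C(h,\mu)/\tilde\alpha(\varepsilon)$ and $\|\mathcal{I}^-_\varepsilon\|_{\mathcal{L}(C_\mu(\R))}\leq C'(h,\mu)$, and a separate check of the limits $(\mathcal{I}^\pm_\varepsilon x)(\pm\infty)$ confirms that $\mathcal{I}^\pm_\varepsilon x\in C_\mu(\R)$.

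For continuity at a point $\varepsilon_0\neq 0$ of the interval, the maps $\varepsilon\mapsto\tilde\alpha(\varepsilon), \beta(\varepsilon)$ are smooth near $\varepsilon_0$, so I would apply the elementary bound
\[
|e^{-a_1 r}-e^{-a_2 r}|\leq |a_1-a_2|\,r\,e^{-\min(a_1,a_2)\,r}\quad(r\geq 0)
\]
to the difference of the two kernels and re-run the boundedness estimate, the additional $r$-factor being absorbed by the residual exponential. This gives $\|\mathcal{I}^\pm_{\varepsilon_1}-\mathcal{I}^\pm_{\varepsilon_2}\|_{\mathcal{L}(C_\mu(\R))}=O(|\varepsilon_1-\varepsilon_2|)$ locally, proving continuity away from $\varepsilon=0$.

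The limit $\varepsilon\to 0$ is the more delicate point. For $\mathcal{I}^+_\varepsilon$, the boundedness estimate already gives $\|\mathcal{I}^+_\varepsilon\|_{\mathcal{L}(C_\mu(\R))}=O(1/\tilde\alpha(\varepsilon))=O(\varepsilon^2)\to 0 = \|\mathcal{I}^+_0\|$ directly, so no MVT-type argument is needed (indeed it would fail since $\tilde\alpha(\varepsilon)\to +\infty$). For $\mathcal{I}^-_\varepsilon$ one has $\beta(\varepsilon)\to 1=\beta(0)$ smoothly, and the same mean-value inequality with $a_1=\beta(\varepsilon),\,a_2=1$ delivers $\|\mathcal{I}^-_\varepsilon-\mathcal{I}\|_{\mathcal{L}(C_\mu(\R))}\to 0$. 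The concluding claim $\mathcal{I}_\varepsilon\to\mathcal{I}$ then follows from $\sigma(\varepsilon)\to 1$ and $\mathcal{I}_\varepsilon=\sigma^{-1}(\varepsilon)(\mathcal{I}^+_\varepsilon+\mathcal{I}^-_\varepsilon)$.

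The main technical obstacle I anticipate is keeping the boundedness estimates uniform across the two complementary regimes $t\leq 0$ (weighted by $e^{-\mu t}$) and $t\geq 0$ (controlled in sup norm), since splitting the integration at $s=h$ produces qualitatively different elementary integrals depending on whether $t$ lies to the left or right of $h$. Carrying these splits carefully through the mean-value-type estimate in the continuity step, and verifying that the extra $r$-factor is absorbed by the residual exponential decay uniformly in the parameters, is the bulk of the bookkeeping.
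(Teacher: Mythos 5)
The paper does not prove Lemma~\ref{8}; it cites it from Faria \& Trofimchuk (2006). With that caveat, your direct kernel-estimation plan is correct and is essentially the only natural route. The essential ingredients are all present: the split of the integration at $s=h$ to separate the $e^{-\mu t}$-weighted estimate on $\R_-$ from the sup-norm estimate on $\R_+$; the observation that $|\varepsilon|<1/\sqrt\mu$ forces $\tilde\alpha(\varepsilon)>\mu$ so the weighted integrals in the $\mathcal{I}^+_\varepsilon$ estimate converge, while $\beta(\varepsilon)+\mu>0$ handles $\mathcal{I}^-_\varepsilon$; the elementary mean-value inequality $|e^{-a_1 r}-e^{-a_2 r}|\leq|a_1-a_2|\,r\,e^{-\min(a_1,a_2)r}$ for local Lipschitz continuity of $\varepsilon\mapsto\mathcal{I}^\pm_\varepsilon$ away from $\varepsilon=0$; and, most importantly, the correct diagnosis of the degeneracy at $\varepsilon=0$. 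You are right that the mean-value argument cannot be applied to $\mathcal{I}^+_\varepsilon$ at $0$ (since $\tilde\alpha(\varepsilon)\to\infty$), and that the direct bound $\|\mathcal{I}^+_\varepsilon\|=O(1/\tilde\alpha(\varepsilon))=O(\varepsilon^2)\to 0=\|\mathcal{I}^+_0\|$ suffices there, whereas $\beta(\varepsilon)=2/(1+\sigma(\varepsilon))$ extends smoothly through $\varepsilon=0$ with $\beta(0)=1$, so the mean-value argument does yield $\mathcal{I}^-_\varepsilon\to\mathcal{I}$. You briefly wave past the check that $\mathcal{I}^\pm_\varepsilon x$ actually lies in $C_\mu(\R)$ (existence of the limits at $\pm\infty$), but this is elementary, e.g.\ $(\mathcal{I}^-_\varepsilon x)(+\infty)=x(+\infty)/\beta(\varepsilon)$ and $(\mathcal{I}^+_\varepsilon x)(+\infty)=x(+\infty)/\tilde\alpha(\varepsilon)$, and you do flag the need for it. No genuine gaps.
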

\begin{lemma} \label{6} If  {\rm \bf(H)} holds and $\mu \not\in\{\Re\lambda_j\}, \ \mu \geq 0$, then $I - \mathcal{N}:
C_{\mu}(\R) \to C_{\mu}(\R)$ is a surjective Fredholm operator and
$\dim$ {\rm Ker} $(I - \mathcal{N}) = d(\mu)$.
\end{lemma}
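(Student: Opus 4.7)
The plan is to recognise $I-\mathcal{N}$ as an ``integrated form'' of the linear delay operator $\mathcal{L}x(t):=x'(t)+x(t)-q(t)x(t-h)$: multiplying $(I-\mathcal{N})x=f$ by $e^t$ and differentiating (for $f\in C^1$) yields $\mathcal{L}x=f'+f$, so Fredholm and range questions for $I-\mathcal{N}$ on $C_{\mu}(\R)$ translate to the corresponding questions for $\mathcal{L}$. Since $q(t)=p+O(e^{\lambda t})$ as $t\to-\infty$ (with characteristic equation (\ref{char})) and $q(t)\to g'(\kappa)$ as $t\to+\infty$, where all roots of $z+1=g'(\kappa)e^{-zh}$ satisfy $\Re z<0$ by the exponential stability and hyperbolicity assumed in {\rm \bf(H)}, $\mathcal{L}$ is asymptotically autonomous and hyperbolic at both ends of $\R$. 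I would therefore invoke the Palmer / Mallet-Paret Fredholm theory for such operators.

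\textbf{Kernel.} Any $x\in\operatorname{Ker}(I-\mathcal{N})$ is automatically $C^1$ and solves $\mathcal{L}x=0$. At $-\infty$ the asymptotic theory of Coppel and Hale for exponentially small perturbations of autonomous linear DDEs gives a $d(\mu)$-dimensional subspace of solutions on $(-\infty,T]$ with $\|x\|^-_{\mu}<\infty$ and $x(-\infty)=0$: these correspond precisely to the roots $\lambda_j$ with $\Re\lambda_j>\mu$, the assumption $\mu\not\in\{\Re\lambda_j\}$ ensuring that no mode is borderline. Each such solution extends uniquely forward to all of $\R$. At $+\infty$, because the autonomous limit is exponentially stable, every forward-bounded solution of the non-autonomous equation converges to zero and hence lies in $C_{\mu}(\R)$. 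Thus $\dim\operatorname{Ker}(I-\mathcal{N})=d(\mu)$.

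\textbf{Fredholm property and surjectivity.} Write $\mathcal{N}=\mathcal{N}_{\infty}+\mathcal{K}$, where $\mathcal{N}_{\infty}$ is defined by replacing $q$ with a smooth $q_{\infty}$ which equals $p$ on $(-\infty,T_1]$ and $g'(\kappa)$ on $[T_2,+\infty)$. The difference $\mathcal{K}$ has kernel $e^{-(t-s)}\mathbf{1}_{\{s<t\}}(q(s)-q_{\infty}(s))$; since $q-q_{\infty}$ decays exponentially at both ends, an Arzel\`a--Ascoli argument on $C_{\mu}(\R)$ shows that $\mathcal{K}$ is compact. The operator $I-\mathcal{N}_{\infty}$ is Fredholm by exponential-dichotomy theory: on $(-\infty,T_1]$ and $[T_2,+\infty)$ it reduces to constant-coefficient problems whose Green's functions are explicitly built from the characteristic roots, and the finite middle interval contributes only a finite-dimensional matching. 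Given $f\in C_{\mu}(\R)$, I would construct $x$ by first inverting the autonomous half-line problem at $-\infty$ in the weighted space (feasible modulo the $d(\mu)$-dimensional homogeneous family), and then continuing forward on $[T_2,\infty)$; the forward continuation is automatically in the space because the $+\infty$ limit equation's stable manifold is the entire phase space, so there is no adjoint obstruction. This forces triviality of the cokernel, giving surjectivity; compactness of $\mathcal{K}$ then preserves the Fredholm property and index.

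\textbf{Main obstacle.} The technically delicate step is compactness of $\mathcal{K}$ on the weighted space $C_{\mu}(\R)$, since the real line is non-compact and equicontinuity together with smallness at infinity must be verified relative to the norm $|\cdot|_{\mu}$. The exponential rate $\lambda$ of $q(t)-p$ at $-\infty$ supplied by Lemma \ref{psi} is what beats the weight $e^{-\mu s}$ for the relevant range $\mu<\lambda$; the other subtle point is the precise identification of the $d(\mu)$-dimensional unstable subspace for the non-autonomous equation at $-\infty$, which is done by an Ansatz $x(t)=e^{\lambda_j t}+o(e^{(\Re\lambda_j)t})$ with corrections controlled by a contraction on a suitable weighted space.
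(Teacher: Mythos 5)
Your overall architecture follows the paper's: recognise $I-\mathcal{N}$ as the integral form of the linear delay operator $\mathcal{L}x=x'+x-q(\cdot)x(\cdot-h)$, exploit the hyperbolic $\alpha$-limit at $-\infty$ (shifted by $\mu$) and the exponentially stable $\omega$-limit at $+\infty$, and conclude. The surjectivity step is essentially the paper's: the paper substitutes $z=x-d$ rather than $x\mapsto f'+f$, then performs the weight shift $z=e^{\mu t}y$ and uses persistence of exponential dichotomy on $\R_-$ together with the stable $\omega$-limit to produce a solution in $C_\mu(\R)$; your account of "no adjoint obstruction at $+\infty$" is a fair abstraction of that. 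Your additional compact-perturbation decomposition $\mathcal{N}=\mathcal{N}_\infty+\mathcal{K}$ is a reasonable alternative route to Fredholm-ness, but it is actually redundant: once surjectivity and $\dim\operatorname{Ker}<\infty$ are in hand, Fredholm-ness (with the stated index) is automatic, and the paper accordingly does not need any compactness lemma.

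The genuine gap is in the kernel dimension. You correctly sketch the \emph{lower} bound: constructing, for each root $\lambda_j$ with $\Re\lambda_j>\mu$, a solution $\phi_j(t)=e^{\lambda_j t}+e^{\sigma t}v_j(t)$ via a contraction in a weighted space (this is exactly the paper's construction, with $\sigma=\lambda+\delta$ and $v_j$ bounded on $\R_-$). What you do not address is the \emph{upper} bound: why these $d(\mu)$ functions span the whole kernel. Invoking "the asymptotic theory of Coppel and Hale" as if it directly delivers a $d(\mu)$-dimensional solution space is not adequate for delay equations; backward dynamics of DDEs are not well-posed, the relevant classification is Mallet-Paret's (1999), and even then the argument is not one line. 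The paper devotes the bulk of the proof to this: starting from an arbitrary $\phi\in\operatorname{Ker}(I-\mathcal{N})\setminus\langle\phi_j\rangle$, it uses Mallet-Paret's Proposition 7.1 repeatedly to extract eigensolution expansions of $\phi-w$ at $-\infty$, iteratively subtracting $\delta/2^k$ from the decay exponent, until the remainder is shown to be a \emph{small} (superexponentially decaying) solution of $\mathcal{L}x=0$, and then appeals to the absence of small solutions to force that remainder to vanish. This absence-of-small-solutions step is the linchpin, and it is entirely missing from your sketch. Without it, your dichotomy argument only bounds the kernel from below, or at best identifies it with an unstable subspace whose equality with $\operatorname{Ker}(I-\mathcal{N})$ is precisely what needs proving.

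A secondary, more minor point: multiplying by $e^t$ and differentiating to get $\mathcal{L}x=f'+f$ requires $f\in C^1$, which a priori need not hold for $f\in C_\mu(\R)$; the paper sidesteps this by the substitution $z=x-d$, which produces a differential equation with inhomogeneity $q(\cdot)d(\cdot-h)$ involving only $d$ itself and not its derivative. Your compactness claim for $\mathcal{K}$ can in fact be verified (the exponential decay of $q-q_\infty$ at both ends beats the weight for $\mu<\lambda$), so that part, while unproved, is not the obstruction.

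\label{lastpage}
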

\begin{proof}
First, we establish that $I - \mathcal{N}$ is an epimorphism. Take
some $d \in C_{\mu}(\R)$ and consider the following integral
equation
$$
x(t) - \int^t_{-\infty}e^{-(t-s)}q(s)x(s-h)ds = d(t).
$$
If we set $z(t) = x(t)-d(t)$, this equation is transformed into
$$
z(t) - \int^t_{-\infty}e^{-(t-s)}q(s)(z(s-h)+d(s-h))ds = 0.
$$
Hence, in order to establish the surjectivity of $I - \mathcal{N}$,
it suffices  to prove the existence of $C_{\mu}(\R)$-solution of the
equation
\begin{equation}\label{bso}
z'(t) = - z(t) + q(t)z(t-h) +q(t)d(t-h).
\end{equation}
First, notice that all solutions of (\ref{bso}) are bounded on the
positive semi-axis $\R_+$ due to the boundedness of $q(t)d(t-h)$ and
the exponential stability of the homogeneous $\omega$-limit equation
$z'(t) = - z(t) + g'(\kappa)z(t-h).$  Here we use the persistence of
exponential stability under small bounded perturbations (e.g. see
Section 5.2 in Chicone \& Latushkin (1999)) and the fact that
$q(+\infty) = g'(\kappa)$. Furthermore, since every solution $z$ of
(\ref{bso}) satisfies $z'(t) = -z(t) + g'(\kappa)z(t-h) +
g'(\kappa)d(+\infty) + \epsilon(t)$ with $\epsilon(+\infty) = 0$, we
get $z(+\infty) =d(+\infty) g'(\kappa)(1-g'(\kappa))^{-1}$.  Next,
by effecting the change of variables $z(t)=\exp (\mu t)y(t)$ to Eq.
(\ref{bso}), we get a linear inhomogeneous equation of the form
\begin{equation}\label{bso0}
y'(t) = -(1+\mu) y(t) + [p\exp(-\mu h) + \epsilon_{1}(t)]y(t-h) +
\epsilon_{2,\mu}(t),
\end{equation}
where $\epsilon_1(-\infty) = \epsilon_{2,0}(-\infty) =0$ and
$\epsilon_{2,\mu}(t) = O(1), \ \mu >0,$ at $t = - \infty$. Since
the $\alpha$-limit equation $y'(t) = -(1+\mu) y(t) + p\exp(-\mu
h)y(t-h), \mu \not\in\{\Re\lambda_j\},$ to the homogeneous part of
(\ref{bso0}) is hyperbolic, due to the above mentioned persistence
of the property of exponential dichotomy, we again conclude that
Eq. (\ref{bso0}) also has an exponential dichotomy on $\R_-$. Thus
(\ref{bso0}) has a solution $y^*_{\mu}$ which is bounded on
$\R_{-}$ (while $y^*_{0}(-\infty) =0$) so that $z^*(t) = \exp (\mu
t)y^*_{\mu}(t) = O(\exp(\mu t)), \ t \to -\infty,$ is a
$C_{\mu}(\R)$-solution of Eq. (\ref{bso}).

Next we prove that $\dim$ {\rm Ker}$(I - \mathcal{N}) =
\#\{\lambda_j: \mu  < \Re\lambda_j \}$. It is clear that $\phi_{j}
\in {\rm Ker}(I - \mathcal{N})$ if and only if $\phi_{j}$ is a
$C_{\mu}(\R)-$solution of the  equation
\begin{equation}\label{veq}
\phi'(t) = - \phi(t) + q(t)\phi(t-h).
\end{equation}
We already have seen that every solution of (\ref{veq}) satisfies
$\phi(+\infty)=0$, thus we only have to show that there exist
solutions $\phi_j$ with $\|\phi_j\|^-_{\mu}< \infty$. In fact, we
will prove that for each $\Re \lambda_{j}
> \mu$ and $\delta \in (0, \min\limits_{\Re\lambda_j >0,\ \lambda
> \Re\lambda_i
>0}\{\Re\lambda_j, \lambda - \Re\lambda_i\})$ there is $\phi_{j}(t)=e^{\lambda_{j}
t}+e^{\sigma t}v_{j}(t)\in {\rm Ker}(I - \mathcal{N}),$ with
$\sigma=\lambda+\delta, \ v_{j}(t)=O(1), \ t \to -\infty.$ Set
$q(t) = p + \epsilon(t)$, then
 $v_{j}(t)$ can be chosen as a bounded solution of the
equation
\begin{equation}\label{av}
v'(t) + (1+\sigma)v(t)-(p+\epsilon(t))e^{-\sigma h}v(t-h)=
e^{-\lambda_{j} h+(\lambda_{j}-\sigma )t}\epsilon(t).
\end{equation}
Since  $e^{-\lambda_{j} h+(\lambda_{j}-\sigma )t}\epsilon(t) =
O(e^{(\Re \lambda_j-\delta) t})$ at $-\infty$,  we get the
following $\alpha$-limit form  of (\ref{av})
\begin{equation*}\label{aav}
v'(t) + (1+\sigma)v(t)-pe^{-\sigma h}v(t-h)= 0.
\end{equation*}
This autonomous equation is exponentially stable since its
characteristic equation
$$
z+\lambda+\delta=-1+pe^{-(z+\lambda+\delta)h}
$$
has roots $z_{j}=\lambda_{j}-\lambda-\delta$ with $\Re
z_{j}=\Re\lambda_{j}-\lambda-\delta<0$. Thus (\ref{av}) has a
unique solution $v_{j}$ bounded in $\R_{-}$. Is clear that
$d(\mu)$ solutions  $\{\phi_{j}\}$ are linearly independent, we
claim that, in fact,  system $\{\phi_{j}\}$ generates {\rm Ker}$(I
- \mathcal{N})$. Indeed,
 suppose for an instance that $\phi \in $ {\rm Ker}$(I
- \mathcal{N})- <\phi_{j}>$.

As $\phi$ solves the equation
$$
x'(t) = - x(t) + px(t-h)+O(\exp((\lambda+\mu) t)), \ t \to -\infty,
$$
we get (e.g. p. 28 in Mallet-Paret (1999))
$$
\phi(t)=z(t)+O(\exp{((\lambda+\mu-\delta})t)), \ t \to -\infty,
$$
where $z(t)$ is the eigensolution corresponding to the eigenvalues
$\zeta$ with $\mu\leq\Re\zeta<\lambda+\mu $. In this way,
\begin{equation}\label{phii}
\phi(t)=C\exp{(\lambda t)}+\sum_{j=2}^{d(\mu)}C_{j}\exp{(\lambda_{j}
t)}+O(\exp{((\lambda+\mu-\delta})t)), \ t \to -\infty.
\end{equation}
Now take
$$w(t)=C(\exp{(\lambda t)}+\exp{(\sigma t)}v_1(t))+\sum_{j=2}^{d(\mu)}C_{j}(\exp{(\lambda_{j} t)}+\exp{(\sigma t)}v_{j}(t)) \in <\phi_j>.$$
Since $\exp{(\sigma t)}v_{j}(t)=O(\exp{(\lambda+\delta)t}), \ t \to
-\infty,$ we can write
\begin{equation*}
w(t)=C\exp{(\lambda t)}+\sum_{j=2}^{d(\mu)}C_{j}\exp{(\lambda_{j}
t)}+O(\exp{((\lambda+\delta})t)), \ t \to -\infty.
\end{equation*}
Thus $\Delta(t): =\phi(t)-w(t)$ satisfies $\Delta(t)
=O(\exp{(\lambda-\delta})t), \ t \to -\infty,$ and solves
\begin{equation}\label{pw}
x'(t)=-x(t)+px(t-h)+O(\exp{((2\lambda-\delta)t)}), \ t \to
-\infty.
\end{equation}
Applying Proposition 7.1 from Mallet-Paret (1999) we conclude that
$$
\Delta(t)=z(t)+O(\exp{((2\lambda-\delta-\delta/2)t)}), \ t \to
-\infty,
$$
where $z(t)$ is the eigensolution corresponding to the eigenvalues
$\zeta$ such that $\lambda-\delta \leq \Re\zeta<
2\lambda-\delta$ and in consequence $z(t)=C_{1}e^{\lambda t}$, for some $C_{1}$.
Hence,
$$
\phi(t) = w(t) + \Delta(t) = C'\exp{(\lambda
t)}+\sum_{j=2}^{d(\mu)}C_{j}\exp{(\lambda_{j}
t)}+O(\exp{((\lambda+\delta})t)), \ t \to -\infty,
$$
for small $\delta >0$. The latter formula improves (\ref{phii}), and
if we take
$$w_1(t)=C'(\exp{(\lambda t)}+\exp{(\sigma t)}v_1(t))+\sum_{j=2}^{d(\mu)}C_{j}(\exp{(\lambda_{j} t)}+\exp{(\sigma t)}v_{j}(t)) \in <\phi_j>,$$
then  $ \Delta_1(t)= \phi(t)- w_1(t)=O(\exp{(\lambda +\delta) t)},
\ t \to -\infty$.  Since $\Delta_1(t)$ satisfies
\begin{equation*}
x'(t)=-x(t)+px(t-h)+O(\exp{((2\lambda+\delta)t)}), \ t \to
-\infty,
\end{equation*}
we can proceed as before to get $
\Delta_1(t)=z_{1}(t)+O(\exp{(2\lambda +\delta-\delta/2) t)}, \ t
\to -\infty,$ where $z_{1}(t)$ is the eigensolution corresponding
to the eigenvalues $\zeta$ such that $\lambda+\delta \leq \zeta
<2\lambda +\delta$. Thus $z_1(t) = 0$ and
$\Delta_1(t)=O(\exp{(2\lambda +\delta-\delta/2) t)}, \ t \to
-\infty$. Iterating this procedure (and subtracting $\delta/2^{k}$
from the exponent $2\lambda +\delta$ on the step $k$), we can
conclude that $ \Delta_{1}(t)=O(\exp{(k\lambda t)}), \ t \to
-\infty, \ k\geq 2.$ This means that $\Delta$ is a small solution
of (\ref{veq}). However, Eq. (\ref{veq})  cannot have solutions
with superexponential decay at $-\infty$ (e.g see p. 9 in Faria \&
Trofimchuk (2006)) and thus $\Delta(t)=0$. This implies that $\phi
\in <\phi_j>$,  a contradiction.
\end{proof}

Throughout the rest of the paper, we will suppose that the
$C^1$-smooth function $g$ is defined and bounded on the whole real
axis $\R$. This assumption does not restrict the generality of our
framework, since it suffices to take any smooth and bounded
extension on $\R_-$ of the nonlinearity $g$ described in {\rm
\bf(G)}. Notice that, since there exists finite $g'(0)$, we have
$g(x) = x\gamma(x)$ for a bounded $\gamma \in C(\R)$. Set $
\gamma_0= \sup_{t \in \R}|\gamma(x)|$.  As it can be easily
checked, $|\mathcal{G}x|_\mu \leq \gamma_0|x|_\mu$ so that
actually $\mathcal{G}$ is well-defined. Furthermore, we have the
following lemma:
\begin{lemma}\label{7} Assume that $g \in C^1(\R)$. Then $\mathcal{G}$ is Fr\'echet continuously
differentiable on $C_{\mu}(\R)$ with differential
$\mathcal{G}'(x_0): y(\cdot) \to g'(x_0(\cdot))y(\cdot).$
\end{lemma}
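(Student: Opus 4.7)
The plan is to verify Fréchet differentiability of $\mathcal{G}$ at an arbitrary $x_0$ and continuity of the map $x_0 \mapsto \mathcal{G}'(x_0)$ directly from the definitions, using only the uniform continuity of $g'$ on compact subsets of $\R$. The main point to watch is how the exponential weight $e^{-\mu t}$ appearing in $\|\cdot\|^-_\mu$ interacts with the pointwise estimates coming from uniform continuity.

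The first observation I record is that every $x \in C_\mu(\R)$ is uniformly bounded on $\R$ with $\sup_{t \in \R}|x(t)| \leq |x|_\mu$. Indeed, on $\R_+$ one has $|x(t)| \leq \|x\|^+$, and on $\R_-$ the estimate $|x(t)| \leq \|x\|^-_\mu\, e^{\mu t} \leq \|x\|^-_\mu$ holds because $\mu \geq 0$ and $t \leq 0$. Consequently, for a fixed $x_0 \in C_\mu(\R)$ and any $y$ with $|y|_\mu \leq 1$, both $x_0(t)$ and $x_0(t) + \theta y(t)$, $\theta \in [0,1]$, lie in the compact interval $K := [-|x_0|_\mu - 1,\, |x_0|_\mu + 1]$ for every $t \in \R$. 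Let $\omega_K$ denote the modulus of continuity of $g'$ on $K$.

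For Fréchet differentiability at $x_0$, the mean value theorem gives $\theta_t \in (0,1)$ with
$$
g(x_0(t) + y(t)) - g(x_0(t)) - g'(x_0(t))\,y(t) = \bigl[g'(x_0(t) + \theta_t y(t)) - g'(x_0(t))\bigr]\,y(t).
$$
The first factor is bounded by $\omega_K(|\theta_t y(t)|) \leq \omega_K(|y|_\mu)$. Taking $\sup_{\R_+}$ directly yields a bound of $\omega_K(|y|_\mu)\,|y|_\mu$ for the $\|\cdot\|^+$ component, while on $\R_-$ the weighted factor $e^{-\mu t}|y(t)| \leq \|y\|^-_\mu \leq |y|_\mu$ absorbs the weight. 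Hence
$$
|\mathcal{G}(x_0 + y) - \mathcal{G}(x_0) - g'(x_0)\,y|_\mu \leq \omega_K(|y|_\mu)\,|y|_\mu = o(|y|_\mu),
$$
establishing Fréchet differentiability with the announced derivative.

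For continuity of $x_0 \mapsto \mathcal{G}'(x_0)$ in the operator norm, the same structure applies: for any $y \in C_\mu(\R)$, the identity $[\mathcal{G}'(x_0) - \mathcal{G}'(x_1)]y(t) = [g'(x_0(t)) - g'(x_1(t))]y(t)$ together with the opening observation applied to $y$ gives
$$
|[\mathcal{G}'(x_0) - \mathcal{G}'(x_1)]\,y|_\mu \leq \sup_{t \in \R}\bigl|g'(x_0(t)) - g'(x_1(t))\bigr|\cdot|y|_\mu.
$$
Since $\sup_{t \in \R}|x_0(t) - x_1(t)| \leq |x_0 - x_1|_\mu$ by the same observation applied to $x_0 - x_1$, uniform continuity of $g'$ on a slightly enlarged compact interval (containing the ranges of both $x_0$ and $x_1$ for $|x_0 - x_1|_\mu \leq 1$) shows that the right-hand side vanishes as $|x_0 - x_1|_\mu \to 0$. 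No step presents a serious obstacle; the only subtlety is the bookkeeping between the $\R_+$ and $\R_-$ pieces of $|\cdot|_\mu$, which the uniform bound $|y(t)| \leq |y|_\mu$ on $\R$ dispatches cleanly.
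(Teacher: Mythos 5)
Your proof is correct and follows essentially the same route as the paper: Taylor/mean value theorem applied pointwise, uniform continuity of $g'$ on the compact interval containing the (uniformly bounded) ranges, and the observation that the weight $e^{-\mu t}$ passes through the pointwise estimate harmlessly. You simply spell out the bookkeeping (the bound $\sup_{t\in\R}|x(t)|\le|x|_\mu$ and the modulus of continuity $\omega_K$) that the paper leaves implicit.
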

\begin{proof} We have that $|\mathcal{G}'(x)u|_\mu=
|g'(x(\cdot))u(\cdot)|_\mu \leq \sup_{t \in \R} |g'(x(t))||u|_\mu$.
By the Taylor formula, $ g(v)-g(v_0) - g'(v_0)(v-v_0) = (
g'(\theta)- g'(v_0))(v-v_0),$ $ \ \theta \in [v,v_0]$. Fix some $x_0
\in C_{\mu}(\R)$. Since functions in $C_{\mu}(\R)$ are bounded and
$g'$ is uniformly continuous on bounded sets of $\R$, for any given
$\delta
>0$ there is $\sigma >0$ such that for $|x-x_0|_\mu<\sigma$ we have
that $ |\mathcal{G}x-\mathcal{G}x_0 - g'(x_0(\cdot))(x-x_0)|_\mu
\leq \delta |x-x_0|_\mu$ and $
\|\mathcal{G}'(x)-\mathcal{G}'(x_0)\|_{\mathcal{L}(C_{\mu}(\R))}
<\delta. $
\end{proof}
\section{Lyapunov-Shmidt reduction}
Being a bounded solution of Eq. (\ref{twe0}), each travelling wave
should satisfy
\begin{equation}\label{psea} \hspace{0mm}
x(t) = \frac{1}{\sigma(\varepsilon)}(
\int\limits_{-\infty}^te^{\frac{-2(t-s)}{1+
\sigma(\varepsilon)}}g(x(s-h))ds +
\int\limits^{+\infty}_te^{\frac{(1+
\sigma(\varepsilon))(t-s)}{2\varepsilon^2}}g(x(s-h))ds),
\end{equation}
For $C_{\mu}(\R)$-solutions, this equation takes the form $ x =
(\mathcal{I}_{\varepsilon}\circ \mathcal{G})x$.
\begin{theorem} \label{mrr}  Assume {\rm \bf(H)}, {\rm \bf(G)}. Let  $\psi $ be
the positive heteroclinic  from Lemma \ref{psi}.  Then for every
$\mu \not= \Re\lambda_j, \ \mu \in [0, \lambda),$ there are open
balls $\mathcal{E}_{\mu}= (-\varepsilon_\mu,\varepsilon_\mu),$
$\mathcal{V}_{\mu}\subset \R^{d(\mu)}$, and continuous family of
heteroclinics $\psi_{\varepsilon,v}: \mathcal{E}_{\mu}\times
\mathcal{V}_\mu \to C_{\mu}(\R)$ of Eq. (\ref{twe0}) such that
$\psi_{0,0} = \psi$. For each $\tilde{\varepsilon} \in
\mathcal{E}_{\mu}$, the subset $\{\psi_{\tilde{\varepsilon},v}: v
\in \mathcal{V}_\mu\} \subset C_{\mu}(\R)$ is $C^1-$manifold of
dimension $d(\mu)$. Moreover, there exists a
$C_\mu(\R)-$neighborhood $\mathcal{U}$ of $\psi$ and
$\varepsilon_1
> 0$ such that every solution $\psi_\varepsilon \in \mathcal{U}, \
|\varepsilon|< \varepsilon_1,$ of Eq. (\ref{twe0}) satisfies
$\psi_\varepsilon = \psi_{\varepsilon,v}$ for some $v \in
\mathcal{V}_\mu$. Finally,  given a closed subinterval $\mathcal{S}
\subset [0, \lambda)\setminus \{\Re\lambda_j\}$, we can choose open
sets $\mathcal{E}_{\mu}, \mathcal{V}_{\mu}$ to be constant on
$\mathcal{S}$.
\end{theorem}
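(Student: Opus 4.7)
The plan is to set up the operator equation
$$F(\varepsilon,x) := x - \mathcal{I}_\varepsilon \mathcal{G}(x) = 0$$
on a ball $\mathcal{E} \times C_\mu(\mathbb{R})$ and apply the parametric implicit function theorem after decomposing along the kernel of the linearization at $(0,\psi)$. Since (\ref{psea}) is the integral reformulation of (\ref{twe0}), the $C_\mu(\mathbb{R})$-zeros of $F(\varepsilon,\cdot)$ are precisely the heteroclinic solutions of (\ref{twe0}) with the required limits at $\pm\infty$; and $F(0,\psi)=0$ because $\psi$ satisfies the reduced equation $x = \mathcal{I}\mathcal{G}(x)$. First I would check that $F$ has the regularity the theorem requires: $\mathcal{G}$ is Fréchet $C^1$ on $C_\mu(\mathbb{R})$ by Lemma \ref{7}, while the family $\mathcal{I}_\varepsilon$ is continuous in the operator norm by Lemma \ref{8}, so $F$ is jointly continuous and $C^1$ in $x$ with $F'_x$ jointly continuous in $(\varepsilon,x)$.

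The decisive computation is
$$F'_x(0,\psi)\,y \;=\; y - \mathcal{I}_0(\mathcal{G}'(\psi)y) \;=\; y(t) - \int_{-\infty}^t e^{-(t-s)} q(s)\, y(s-h)\, ds \;=\; (I-\mathcal{N})y,$$
using $\mathcal{I}_0 = \mathcal{I} = \mathcal{I}^-_0$. By Lemma \ref{6}, $I - \mathcal{N}$ is a surjective Fredholm operator with kernel $K_\mu := \langle \phi_1,\dots,\phi_{d(\mu)}\rangle$ of dimension $d(\mu)$. Pick any closed topological complement $X_1$, so $C_\mu(\mathbb{R}) = K_\mu \oplus X_1$ and the restriction $F'_x(0,\psi)|_{X_1} \to C_\mu(\mathbb{R})$ is a Banach isomorphism.

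Now I would carry out the Lyapunov--Schmidt step. Writing every nearby candidate as $x = \psi + v + w$ with $v \in K_\mu \cong \mathbb{R}^{d(\mu)}$ and $w \in X_1$, set $G(\varepsilon,v,w) := F(\varepsilon, \psi + v + w)$. Since $G'_w(0,0,0) = F'_x(0,\psi)|_{X_1}$ is an isomorphism, the parametric implicit function theorem gives numbers $\varepsilon_\mu>0$, a neighborhood $\mathcal{V}_\mu \subset K_\mu$ of the origin, and a continuous map $(\varepsilon,v) \mapsto w(\varepsilon,v) \in X_1$, $C^1$ in $v$, with $w(0,0)=0$, such that the solutions of $F(\varepsilon,\cdot)=0$ in a $C_\mu(\mathbb{R})$-neighborhood $\mathcal{U}$ of $\psi$ are exactly $\psi_{\varepsilon,v} := \psi + v + w(\varepsilon,v)$. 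For each fixed $\tilde\varepsilon$, the map $v \mapsto \psi_{\tilde\varepsilon,v}$ is an injective $C^1$-immersion into $C_\mu(\mathbb{R})$ because $v$ lies in a direct summand, giving the asserted $d(\mu)$-dimensional manifold. The local uniqueness clause of the IFT delivers the last property of the family.

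For the final uniformity statement on a closed subinterval $\mathcal{S}\subset [0,\lambda)\setminus\{\Re\lambda_j\}$, I would observe that $d(\mu)$ is locally constant, hence constant on $\mathcal{S}$, and that the basis $\{\phi_j\}$ produced in Lemma \ref{6} depends only on which $\lambda_j$ are being included, not on $\mu$ itself; so a single complement $X_1$ serves for every $\mu\in\mathcal{S}$. It then remains to extract $\mu$-uniform bounds on $\|\mathcal{I}_\varepsilon\|_{\mathcal{L}(C_\mu)}$, on $\|(F'_x(0,\psi)|_{X_1})^{-1}\|$, and on the continuity modulus of $F'_x$, so that the quantitative version of the IFT produces common radii $\varepsilon_\mu$, $|\mathcal{V}_\mu|$. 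The main obstacle I anticipate is precisely this uniformity across the scale of spaces $\{C_\mu(\mathbb{R}):\mu\in\mathcal{S}\}$: the bounds underlying Lemmas \ref{8} and \ref{6} need to be revisited and made uniform in $\mu$, relying on the fact that on the compact set $\mathcal{S}$ the exponents $\mu$ stay at positive distance from the finite set $\{\Re\lambda_j\}$ and from $\lambda$.
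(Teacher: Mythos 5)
Your Lyapunov--Schmidt reduction coincides with the paper's for the main body of the theorem: same operator equation $F(\varepsilon,x)=x-\mathcal{I}_\varepsilon\mathcal{G}(x)$ (the paper writes it in the shifted variable $\phi=x-\psi$), the same regularity inputs from Lemmas~\ref{7} and~\ref{8}, the same identification $F'_x(0,\psi)=I-\mathcal{N}$, the same Fredholm/surjectivity input from Lemma~\ref{6} with $C_\mu(\R)=V\oplus W$, and the same IFT-produced family $\psi_{\varepsilon,v}=\psi+v+w(\varepsilon,v)$, jointly continuous and $C^1$ in $v$. Two remarks.

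A small gloss: you assert that $C_\mu$-zeros of $F(\varepsilon,\cdot)$ are ``precisely the heteroclinics with the required limits.'' But membership in $C_\mu(\R)$ only guarantees that $x(+\infty)$ is finite; passing to the limit in (\ref{psea}) yields $x(+\infty)\in\{0,\kappa\}$, and one must use the continuity of $(\varepsilon,v)\mapsto\psi_{\varepsilon,v}$ together with $\psi_{0,0}(+\infty)=\kappa$ to conclude $\psi_{\varepsilon,v}(+\infty)=\kappa$, as the paper does explicitly. This step is missing from your write-up.

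More substantively, your plan for the final claim --- that $\mathcal{E}_\mu,\mathcal{V}_\mu$ can be chosen constant over a closed $\mathcal{S}\subset[0,\lambda)\setminus\{\Re\lambda_j\}$ --- diverges from the paper and is left unfinished. You correctly observe that $d(\mu)$ is constant on $\mathcal{S}$ and that the kernel basis $\{\phi_j\}$ does not change, but you then propose to rework Lemmas~\ref{8} and~\ref{6} to extract $\mu$-uniform bounds on $\|\mathcal{I}_\varepsilon\|$, on $\|(L|_W)^{-1}\|$, and on the modulus of continuity of $F'_x$, feeding these into a quantitative IFT, and you flag this as the principal unresolved obstacle. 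The paper sidesteps all of that with a monotonicity observation in the scale of Banach spaces: the norms $|\cdot|_\mu$ are non-decreasing in $\mu$, so $C_{\mu'}(\R)\hookrightarrow C_\mu(\R)$ is a norm-one embedding for $\mu<\mu'$, and the IFT neighborhoods $\mathcal{E}_\mu,\mathcal{V}_\mu,\mathcal{W}_\mu$ are therefore non-increasing in $\mu$. Combined with the constancy of $d(\mu)$ on $\mathcal{S}$, one simply fixes the data at the right endpoint of $\mathcal{S}$. You should exploit the nesting of the scale $\{C_\mu(\R)\}$ rather than re-derive uniform estimates from scratch; as written, that part of your argument is a genuine gap.
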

\begin{proof} Set $R_{\mu} = (-1/\sqrt{\mu},
1/\sqrt{\mu})$ and then define $F:R_{\mu} \times C_{\mu}(\R)  \to
C_{\mu}(\R)$ by $F(\varepsilon,\phi) =\psi +\phi -
(\mathcal{I}_{\varepsilon}\circ \mathcal{G})(\psi + \phi).$ We
have that  $F(0,0)=0$. Furthermore, Lemmas \ref {8} and \ref{7}
imply that $F\in C(R_{\mu} \times C_{\mu}(\R),C_{\mu}(\R))$ and
$F_\phi(\varepsilon, \phi)$ is continuous in a neighborhood of
$(0,0)$. Set
$$
L:=F_{\phi}(0,0)=I-\mathcal{N}, \ V: ={\rm Ker}L, \
r(\varepsilon,\phi): = F(\varepsilon, \phi)-L\phi.
$$
Then $r_{\phi}(0,0)=F_{\phi}(0, 0)-L=0$. By Lemma \ref{6}, we have
that $\dim V<\infty$ and that $L$ is surjective. Thus $V$ has a
topological complement $W$ in $C_{\mu}(\R)$ so that
$C_{\mu}(\R)=V\oplus W$ and any $\phi\in C_{\mu}(\R)$ can be written
in the form $\phi=v+w$, $v \in V$ and $w \in W$. Recalling that
$Lv=0$ we get $ F(\varepsilon, \phi)=Lw +r(\varepsilon,v +w). $ This
suggests the following definition:
$$
\Phi(\varepsilon,v, w):=L|_Ww+r(\varepsilon, v+w),
$$
where $\Phi_{w}(0, 0, 0)= L|_W$ is the restriction of $L$ to $W$. Is
clear that $\Phi \in C(R_{\mu}\times V\times W, C_{\mu}(\R))$ and
$\Phi_{w}(\varepsilon, v, w)=L|_W+r_{\phi}(\varepsilon, v + w)$ is
continuous in a neighborhood of $(0,0,0)$. Since $L|_W: W \to
C_{\mu}(\R)$ is bijective we have that $(L|_{W})^{-1}$ is continuous
from $C_{\mu}(\R)$ to $W$. As a consequence, we can apply the
Implicit Function Theorem (e.g. see Theorem 2.3(i) in Ambrosetti \&
Prodi (1993)) to
$$\Phi(\varepsilon,v, w)=L|_Ww+r(\varepsilon, v+w)=0, \quad \Phi(0,0,
0)=0.$$  In this way, we find neighborhoods of $0$,
$\mathcal{E}_{\mu}\subset R_{\mu}$, $\mathcal V_{\mu}\subset V$ and
$\mathcal W_\mu\subset W$ and a continuous map $\gamma\in
C^{1}_v(\mathcal{E}_{\mu}\times \mathcal V_{\mu}, \mathcal W_\mu)$,
such that $\Phi(\varepsilon,v ,\gamma(\varepsilon,v))=0$ for all
$(\varepsilon,v) \in \mathcal{E}_{\mu}\times \mathcal V_{\mu}$.
Moreover, without restricting the generality, we can suppose that
$\Phi(\varepsilon,v, w)= 0$ with $(\varepsilon,v, w) \in
\mathcal{E}_{\mu}\times \mathcal V_{\mu} \times \mathcal W_\mu$
implies $w= \gamma(\varepsilon,v)$ (e.g. see Theorem 2.3(ii) in
Ambrosetti \& Prodi (1993)).

Hence, the continuous family $\psi_{\varepsilon,v}=\psi+
v+\gamma(\varepsilon,v): \mathcal{E}_{\mu}\times \mathcal V_{\mu}
\to C_{\mu}(\R)$ contains all  solutions of Eq. (\ref{twe0}) from
small neighborhoods of $\psi$, with  $\psi_{0,0} = \psi$. Since
$\gamma_v(0,0) =0$ and $\gamma_v (\varepsilon,v)$ is continuous
for each fixed $\varepsilon \in \mathcal{E}_{\mu}$, we conclude
that $\{\psi_{\varepsilon,v}: v \in \mathcal{V}_\mu\} \subset
C_{\mu}(\R)$ is $C^1-$smooth manifold of dimension $d(\mu)$.
Notice that (\ref{psea}) implies that $g(\psi_{\varepsilon,
v}(+\infty))=\psi_{\varepsilon, v}(+\infty)$. Thus
$\psi_{\varepsilon, v}(+\infty) = \psi_{0,0}(+\infty)= \kappa$, so
that $\{\psi_{\varepsilon,v}\}$ are heteroclinic solutions of
(\ref{twe0}).

Finally, the last conclusion of the theorem follows from the simple
observations that (a) the sets $\mathcal{E}_{\mu}, \mathcal V_{\mu},
\mathcal W_\mu$ are non-increasing in $\mu$ and (b) the function
$d(t)$ is piece-wise constant, with discontinuities at
$\{\Re\lambda_j\}\cap [0, \lambda)$.
 \end{proof}
\section{Asymptotic formulae}
Throughout this section, we denote by $\beta, \gamma, \eta, b,
C,C_j,C_*, \dots$ some positive constants that are independent of
the parameters $\varepsilon \in \Lambda_j: =
(-\varepsilon_j,\varepsilon_j), \ v \in \Omega$, where $1
>\varepsilon_0> \varepsilon_1 > \dots > \varepsilon_* > 0,$ and $\Omega \subset
\R^q$. We also assume that $h >0, p > 1$.
\begin{lemma}\label{111}
 Let continuous $x_{\varepsilon,v}(\cdot),
f_{\varepsilon,v}(\cdot):\Lambda_0 \times \Omega \times \R \to \R$
satisfy
\begin{equation} \label{t}
\varepsilon^2 x''(t)+ x'(t)-x(t)+px(t+h)=f_{\varepsilon,v}(t),\quad
t \in \R.
\end{equation}
\noindent Suppose further that $\displaystyle {\sup_{t\leq
0}[|x_{\varepsilon,v}(t)|+ |f_{\varepsilon,v}(t)|]}\leq C,$ $\
|x_{\varepsilon,v}(t)|\leq Ce^{-\gamma t},\ t\geq 0,$ and that $\
|f_{\varepsilon,v}(t)|\leq Ce^{-b t},\ t\geq 0,$ $(\varepsilon,v)
\in \Lambda_0 \times \Omega$. Then, given $\sigma \in (0,b)$, it
holds
$$x_{\varepsilon,v}(t)=z_{\varepsilon,v}(t) + w_{\varepsilon,v}(t),\ t \in \R,$$
where, with some continuous and bounded $B_j:
(-\varepsilon_*,\varepsilon_*) \times \Omega \to \C$,
$$z_{\varepsilon,v}(t)= \sum_{\gamma \leq \Re
\lambda_j(\varepsilon)<
b-\sigma}B_j(\varepsilon,v)e^{-\lambda_j(\varepsilon)t}$$ is a
finite sum of eigensolutions of (\ref{t}) associated to the roots
$\lambda_j(\varepsilon)\in \{\gamma \leq \Re
\lambda_j(\varepsilon)< b-\sigma\}$ of (\ref{char2}) and
$|w_{\varepsilon,v}(t)|\leq C_*e^{-(b-\sigma)t}, \ t \geq 0,$
$(\varepsilon,v) \in (-\varepsilon_*,\varepsilon_*)\times \Omega$.
\end{lemma}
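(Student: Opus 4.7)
My plan is to regard equation~(\ref{t}) as a constant-coefficient linear equation of mixed advanced--retarded type and extract the claimed spectral decomposition by a Laplace transform / contour shift argument, in the spirit of Proposition~7.1 of Mallet-Paret (1999) already invoked in the proof of Lemma~\ref{6}. Setting $\hat x(z):=\int_0^\infty e^{-zt}x_{\varepsilon,v}(t)\,dt$, the hypothesis $|x_{\varepsilon,v}(t)|\le Ce^{-\gamma t}$ makes $\hat x$ holomorphic on $\{\Re z>-\gamma\}$ with a uniform bound in $(\varepsilon,v)$, and similarly $\hat f_{\varepsilon,v}$ is holomorphic on $\{\Re z>-b\}$. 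Using $\int_0^\infty e^{-zt}x(t+h)\,dt=e^{zh}\hat x(z)-e^{zh}\int_0^h e^{-zs}x(s)\,ds$, the Laplace transform of (\ref{t}) becomes
\begin{equation*}
\Delta(\varepsilon,z)\,\hat x(z)=N_{\varepsilon,v}(z),\qquad \Delta(\varepsilon,z):=\varepsilon^2z^2+z-1+pe^{zh},
\end{equation*}
where $N_{\varepsilon,v}$ is holomorphic on $\{\Re z>-b\}$, built from $\hat f_{\varepsilon,v}$, the boundary data $x_{\varepsilon,v}(0)$, $x_{\varepsilon,v}'(0)$ and the integral of $x_{\varepsilon,v}$ over $[0,h]$. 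The zeros of $\Delta(\varepsilon,\cdot)$ are exactly the numbers $z=-\lambda_j(\varepsilon)$ of Lemma~\ref{lemma}, so $\hat x$ extends meromorphically across $\{-b<\Re z\le -\gamma\}$ with only finitely many simple poles.

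Next I would recover $x_{\varepsilon,v}$ by Bromwich inversion along $\Re z=-\gamma+\delta$ and shift the contour leftward to $\Re z=-(b-\sigma)$. By Lemma~\ref{lemma}(i)--(iii) only finitely many roots $\lambda_j(\varepsilon)$ satisfy $\gamma\le \Re\lambda_j(\varepsilon)<b-\sigma$, they are all simple, and after possibly shrinking $\varepsilon_*$ (and perturbing $\sigma$ to avoid the countable ``bad'' set where some $\Re\lambda_j$ equals $b-\sigma$) none of them meets the contour for $|\varepsilon|<\varepsilon_*$. The residue theorem then yields, for $t\ge 0$,
\begin{equation*}
x_{\varepsilon,v}(t)=z_{\varepsilon,v}(t)+w_{\varepsilon,v}(t),\qquad B_j(\varepsilon,v)=\frac{N_{\varepsilon,v}(-\lambda_j(\varepsilon))}{\partial_z\Delta(\varepsilon,-\lambda_j(\varepsilon))},
\end{equation*}
with $z_{\varepsilon,v}$ the exponential sum of the statement. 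The identity is then extended to all $t\in\R$ simply by \emph{defining} $w_{\varepsilon,v}:=x_{\varepsilon,v}-z_{\varepsilon,v}$ on $\R$, which is consistent because each term $e^{-\lambda_j(\varepsilon)t}$ is a global solution of the homogeneous version of (\ref{t}).

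The bound $|w_{\varepsilon,v}(t)|\le C_*e^{-(b-\sigma)t}$ for $t\ge 0$ will follow from estimating the shifted Bromwich integral directly: on $\Re z=-(b-\sigma)$ the numerator $|N_{\varepsilon,v}(z)|$ is $O(1+|\Im z|^{-1})$ after one integration by parts on the $\hat f_{\varepsilon,v}$ and boundary pieces (which is legitimate because $f_{\varepsilon,v}$ and $x_{\varepsilon,v}$ are continuous), while $|\Delta(\varepsilon,z)|\to\infty$ as $|\Im z|\to\infty$ at rate $|\Im z|$ when $\varepsilon=0$ and at rate $\varepsilon^2|\Im z|^2$ when $\varepsilon>0$. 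Continuity and boundedness of $B_j(\varepsilon,v)$ in $(\varepsilon,v)$ is then a consequence of the continuity of the simple roots $\lambda_j(\varepsilon)$ given by Lemma~\ref{lemma}(i) together with the continuous dependence of $N_{\varepsilon,v}$ on the parameters, which is inherited from the continuity of $x_{\varepsilon,v}$ and $f_{\varepsilon,v}$ in $(\varepsilon,v)$.

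The hard part will be the \emph{uniformity} of these estimates as $\varepsilon\to 0$. The root $\lambda_\infty(\varepsilon)\sim\varepsilon^{-2}$ of (\ref{char2}) has no $\varepsilon=0$ counterpart, so $\Delta(\varepsilon,\cdot)$ is not a straightforward analytic perturbation of $\Delta(0,\cdot)$. However, by Lemma~\ref{lemma}(iii) this root remains confined to $\Re z>\varepsilon^{-2}-2(p-1)$, i.e., far to the right of the contour of interest, so its contribution on the left-hand Bromwich line is exponentially small and can be absorbed into $w_{\varepsilon,v}$. For the finitely many ``physical'' roots in the strip $\gamma\le \Re z\le b-\sigma/2$, continuity in $\varepsilon$ (Lemma~\ref{lemma}(ii)) together with a Rouché argument keeps them at a positive distance from the line $\Re z=-(b-\sigma)$ uniformly on $(-\varepsilon_*,\varepsilon_*)$, yielding the required uniform lower bound on $|\Delta(\varepsilon,z)|$ along the contour and, ultimately, the constants $\varepsilon_*$ and $C_*$ advertised in the statement.
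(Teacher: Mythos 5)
Your overall strategy (Laplace transform, meromorphic extension across the strip, contour shift, residues) is the same as the paper's.  The gap is in the final step: you claim the pointwise bound $|w_{\varepsilon,v}(t)|\le C_* e^{-(b-\sigma)t}$ "will follow from estimating the shifted Bromwich integral directly", but the estimate you describe does not actually yield an absolutely convergent Bromwich integral with a uniform constant.  Along $\Re z=-(b-\sigma')$ the denominator satisfies $|\Delta(\varepsilon,z)|\gtrsim \varepsilon^2|z|^2+|z|$, so even an $O(1)$ numerator gives only $H_{\varepsilon,v}=O(1/|z|)$ \emph{uniformly in $\varepsilon$} — which is $L^2$ but not $L^1$ on the line.  (For fixed $\varepsilon>0$ you do get $O(1/(\varepsilon^2|z|^2))$, but the constant blows up as $\varepsilon\to 0$, and uniformity in $\varepsilon$ is the whole point of the lemma.)  Your proposed integration by parts on $\hat f_{\varepsilon,v}$ also is not justified: $f_{\varepsilon,v}$ is only assumed continuous, not differentiable, so you cannot squeeze out an extra $1/|z|$ that way.

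The paper confronts this exact obstacle with two additional steps that your sketch omits.  It first uses Plancherel to turn the uniform $O(1/|z|)$ bound on the critical line into a uniform $L^2$ bound on $u_{\varepsilon,v}(t)=e^{(b-\sigma')t}w_{\varepsilon,v}(t)$, then uses the gap $\sigma-\sigma'>0$ together with Cauchy--Schwarz to get a uniform $L^1$ bound on $v_{\varepsilon,v}(t)=e^{(b-\sigma)t}w_{\varepsilon,v}(t)$.  Finally it bootstraps from $L^1$ to $L^\infty$ by observing that $w_{\varepsilon,v}$ solves the same equation~(\ref{t}) and writing the resulting second-order ODE for $v_{\varepsilon,v}$ in variation-of-constants form; the singular factor $1/\varepsilon^2$ is tamed by the good sign of $\alpha=1-2\varepsilon^2(b-\sigma)$.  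Without something equivalent to these Plancherel/$L^1$/ODE-bootstrap arguments your decomposition into $z_{\varepsilon,v}+w_{\varepsilon,v}$ is formally correct, but the crucial uniform pointwise estimate on $w_{\varepsilon,v}$ is not established.
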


\begin{proof}
Applying the Laplace transform $\mathcal{L}$ to equation
(\ref{t}), we obtain
$$
\chi(z,\varepsilon)\tilde{x}_{\varepsilon,v}(z)=\tilde{f}_{\varepsilon,v}(z)+r_{\varepsilon,v}(z),
$$
where $\chi(z,\varepsilon)=\varepsilon^2 z^2+z-1+p\exp(z h), \
\tilde{x}_{\varepsilon,v}=\mathcal{L}\{x_{\varepsilon,v}\}, \
\tilde{f}_{\varepsilon,v}=\mathcal{L}\{f_{\varepsilon,v}\},$ and
$$r_{\varepsilon,v}(z)=\varepsilon^2(x_{\varepsilon,v}'(0)+zx_{\varepsilon,v}(0))
+x_{\varepsilon,v}(0)+pe^{zh}\int_{0}^h
e^{-zu}x_{\varepsilon,v}(u)du.$$ Since $x_{\varepsilon,v}e^{\gamma
t}$ is bounded, $\tilde{x}_{\varepsilon,v}$ is holomorphic in the
open half-plane $\{\Re z>-\gamma\}$. Similarly,
$\tilde{f}_{\varepsilon,v}$ is holomorphic in $\{\Re z>-b\}$.
Since $r_{\varepsilon,v}$ is entire, the function
$$H_{\varepsilon,v}(z):=(\tilde{f}_{\varepsilon,v}(z)+r_{\varepsilon,v}(z))/\chi(z,\varepsilon)$$
is meromorphic in $\Re z>-b$, with only finitely many poles there.

\noindent{\it \underline{Step I.}} We claim that there are
$\sigma' \in (0,\sigma), \ \varepsilon_1>0,$ such that
$|H_{\varepsilon,v}(z)|\leq C_1/|z|,$ if $ \Re z = -b + \sigma'$,
$(\varepsilon, v) \in \Lambda_1 \times \Omega$. Indeed, take
$\sigma' \in (0,\sigma)$ such that the line $\Re z=-b+\sigma'$
does not contain any eigenvalue $-\lambda_j(\varepsilon), \
\varepsilon\in\overline{\Lambda}_1,$ and $1-b+\sigma' \not= 0$. We
have
\begin{equation*}
|\tilde{f}_{\varepsilon,v}(z)|\leq \int_0^{+\infty} e^{-\Re
zt}|f_{\varepsilon,v}(t)|dt\leq C\int_0^{+\infty} e^{-\Re
zt}e^{-bt}dt\leq \frac{C}{\sigma'}, \ \Re z \geq -b +\sigma';
\end{equation*}
\begin{equation*}
|r_{\varepsilon,v}(z)|\leq
\varepsilon^2(|x_{\varepsilon,v}'(0)|+|z||x_{\varepsilon,v}(0)|)+|x_{\varepsilon,v}(0)|+
pe^{\Re z h}\displaystyle\int_{0}^{h} e^{-\Re
zu}|x_{\varepsilon,v}(u)|du.
\end{equation*}
As a bounded solution of (\ref{t}), $x_{\varepsilon,v}$ should
satisfy,  for all $t \in \R, $
\begin{equation} \label{t2} \hspace{+3mm}
x_{\varepsilon,v}(t)=\frac{1}{\sqrt{1+4\varepsilon^2}}\left(
\int_{-\infty}^te^{\bar{\lambda}(t-s)}G_{\varepsilon,v}(s)ds
+\int_t^{+\infty}e^{\bar{\mu}(t-s)}G_{\varepsilon,v}(s)ds\right),
\end{equation}
where $\bar{\lambda}<0<\bar{\mu}$ are the roots of  $\varepsilon^2
z^2+z-1=0$ and
$G_{\varepsilon,v}(t):=px_{\varepsilon,v}(t+h)-f_{\varepsilon,v}(t).$
Differentiating (\ref{t2}), we obtain
\begin{equation} \label{t3}
 x_{\varepsilon,v}'(t)=\frac{1}{\sqrt{1+4\varepsilon^2}}
\left(\bar{\lambda}
\int_{-\infty}^te^{\bar{\lambda}(t-s)}G_{\varepsilon,v}(s)ds
+\bar{\mu}\int_t^{+\infty}e^{\bar{\mu}(t-s)}G_{\varepsilon,v}(s)ds\right),
\end{equation}
so that
$$\begin{displaystyle}|x_{\varepsilon,v}'(0)|\leq \frac{\bar{\mu}}{\sqrt{1+4\varepsilon^2}}\int_0^{+\infty}
e^{-\bar{\mu} s}|G_{\varepsilon,v}(s)|ds
+\frac{|\bar{\lambda}|}{\sqrt{1+4\varepsilon^2}}\int^0_{-\infty}
e^{-\bar{\lambda} s}|G_{\varepsilon,v}(s)|ds  \leq
\end{displaystyle}$$
$$ \begin{displaystyle}(p+1)C\left(\int_0^{+\infty} \bar{\mu}e^{-\bar{\mu} s}ds
+|\bar{\lambda}|\int^0_{-\infty} e^{-\bar{\lambda} s}ds\right) =
2C(p+1).
\end{displaystyle}
$$
Fix $k > -b +\sigma'$ and consider the vertical strip
$\Sigma_k:=\{-b+\sigma' \leq\Re z\leq k\}$, then
$$\begin{displaystyle} pe^{\Re zh}
\int_{0}^h|e^{-zu}x_{\varepsilon,v}(u)|du \leq
Cpe^{kh}\int_{0}^he^{b u}du:={C_3}, \ z \in \Sigma_k,
\end{displaystyle}$$
so that $\begin{displaystyle}|r_{\varepsilon,v}(z)|\leq
C_4(1+\varepsilon^2|z|)\end{displaystyle}, \  z \in \Sigma_k$.

\noindent Set $b(z)=-1+pe^{zh}$, then $|b(z)|\leq 1+pe^{kh} :=
\beta, z \in \Sigma_k,$ and
\begin{eqnarray}\label{H} \hspace{-5mm} |z||H_{\varepsilon,v}(z)|\leq
\dfrac{C_5(|z|+\varepsilon^2|z|^2)}{|\varepsilon^2 z^2+z+b(z)|}, \ \
z \in \Sigma_k.
\end{eqnarray}
% := \Gamma_\varepsilon(z)
Now,  set $y_0=\eta \beta$ for some $\eta> 2$ satisfying $\eta^2
\geq 2\beta^{-1}\sqrt{\eta^2\beta^2+b^2}$ and $\eta \beta >
b-\sigma'$. For all $z$ such that $\Re z=-b+{\sigma'}$, and $|\Im z
|\geq y_0$, we have
$$|\varepsilon z^2+z|=|z||\varepsilon^2 z+1|\geq y_0|\varepsilon^2
z+1|\geq \frac{y^2_0}{\sqrt{y_0^2+(b- \sigma')^2}} \geq 2\beta.$$
Thus $|\varepsilon^2 z^2+z +b(z)|\geq |\varepsilon^2 z^2+z|
-|b(z)|\geq |\varepsilon^2 z^2+z| -\beta\geq {|\varepsilon^2
z^2+z|}/{2}$, so that
\begin{equation}\label{ips}
\dfrac{(|z|+\varepsilon^2|z|^2)}{|\varepsilon^2 z^2+z
+b(z)|}\leq2\dfrac{1+\varepsilon^2|z|}{|\varepsilon^2 z+1|}\leq \eta
+\displaystyle\sup_{\Re z=-b +\sigma'}\dfrac{2|\varepsilon^2
z|}{|\varepsilon^2 z+1|}\leq 2\eta,
\end{equation}
for all
 $|\Im z|\geq
y_0$, $\Re z=-b+{\sigma'}$ and $\varepsilon \in \Lambda_1$.

\noindent Finally, for all $(z,\varepsilon) \in \{z: \Re
z=-b+{\sigma'}, |\Im z|\leq y_0\} \times \overline{\Lambda}_1$, we
have that
$$\dfrac{|z|+\varepsilon|z|^2}{|\varepsilon z^2+z +b(z)|}\leq
C_6.$$ Combining this inequality with (\ref{H}), (\ref{ips}), we
prove the main assertion of Step I.

\noindent{\it \underline{Step II.}} \  Taking $k > 0$, in virtue of
(\ref{H}) we can use the inversion formula
\begin{equation} \label{t4}x_{\varepsilon,v}(t)=\frac{1}{2\pi i}\int_{k-\infty
i}^{k+\infty i}e^{zt}\tilde{x}_{\varepsilon,v}(z)dz=\frac{1}{2\pi
i}\int_{k-\infty i}^{k+\infty i}e^{zt}H_{\varepsilon,v}(z)dz, \
t\geq 0.
\end{equation}
By Lemma \ref{lemma}, $H_{\varepsilon,v}(z)$ has only finitely
many poles in the strip $-b<\Re z\leq -\gamma$. Also,
$H_{\varepsilon,v}(z)\to 0$ uniformly in the strip $-b+\sigma'
\leq\Re z\leq k$, as $|\Im z|\to \infty$, and
$H_{\varepsilon,v}(-b+\sigma'+i \cdot)\in L_2$. Thus, we may shift
the path of integration in (\ref{t4}) to the left, to the line
$\Re z=-b+\sigma'$, and obtain $
x_{\varepsilon,v}(t)=z_{\varepsilon,v}(t)+w_{\varepsilon,v}(t), $
where
\begin{equation*}
z_{\varepsilon,v}(t)=\displaystyle \sum_{\gamma \leq \Re
\lambda_j(\varepsilon)< b-\sigma'}{\rm
Res}_{-\lambda_j(\varepsilon)}e^{zt}H_{\varepsilon,v}(z),\
w_{\varepsilon,v}(t)=\frac{1}{2\pi
i}\displaystyle\int\limits_{-b+\sigma'-\infty\cdot
i}^{-b+\sigma'+\infty\cdot i}e^{zt}H_{\varepsilon,v}(z)dz.
\end{equation*} By Lemma \ref{lemma}, the roots of
equation $\chi(z,\varepsilon)=0$ are simple for all small
$\varepsilon$. Hence
\begin{eqnarray*}
z_{\varepsilon,v}(t) =\sum_{\gamma \leq \Re \lambda_j(\varepsilon)<
b-\sigma'} e^{-\lambda_j(\varepsilon)t}B_j(\varepsilon,v), \ {\rm
with}\ B_j(\varepsilon,v)=
\dfrac{\tilde{f}_{\varepsilon,v}(-\lambda_j(\varepsilon))+
r_{\varepsilon,v}(-\lambda_j(\varepsilon))}{\chi'(-\lambda_j(\varepsilon),\varepsilon)}.
\end{eqnarray*}
It is easy to check that $B_j(\varepsilon,v)$ is continuous on its
domain of definition (observe here that the continuity of
 $x_{\varepsilon,v}'(0)$ follows from (\ref{t3})). Take $j$ such that $-b+ \sigma'<-\Re \lambda_j(\varepsilon)\leq
-\gamma$, then
$\begin{displaystyle}|r_{\varepsilon,v}(-\lambda_j(\varepsilon))|\leq
C_4(\varepsilon^2|\lambda_j(\varepsilon)|+1)\leq
C_4(\max_{j,\varepsilon}|\lambda_j(\varepsilon)|+ 1):=
C_7\end{displaystyle}$. In addition, if $\varepsilon \to 0$ then
 \begin{eqnarray*}0<|\chi'(-\lambda_j(\varepsilon),\varepsilon)|&=&|-2\varepsilon^2
 \lambda_j(\varepsilon)+1+phe^{-\lambda_j(\varepsilon)
h}| \to |1+phe^{-\lambda_jh}|\neq 0.\end{eqnarray*}
\begin{eqnarray*}\hspace{-5mm}
{ \rm Hence}, \  |B_j(\varepsilon,v)|&\leq&
\dfrac{|\tilde{f}_{\varepsilon,v}(-\lambda_j(\varepsilon))|+
|r_{\varepsilon,v}(-\lambda_j(\varepsilon))|}{|\chi'(-\lambda_j(\varepsilon),\varepsilon)|}\leq
 \dfrac{C/\sigma'+
C_7}{\displaystyle\min_{j,\varepsilon}|\chi'(-\lambda_j(\varepsilon),\varepsilon)|}\leq
C_{8}
\end{eqnarray*} if $\varepsilon \in \Lambda_{2}$, for some small $\varepsilon_{2}>0$  and $v \in \Omega$.

\noindent{\it \underline{Step III.}} \ Consider
$u_{\varepsilon,v}(t)=e^{(b-\sigma')t}w_{\varepsilon,v}(t)$ and
$v_{\varepsilon,v}(t)=e^{(b-\sigma)t}w_{\varepsilon,v}(t)$. We
have
$$
u_{\varepsilon,v}(t)=\frac{1}{2\pi
i}\displaystyle\int_{-b+\sigma'-\infty\cdot
i}^{-b+\sigma'+\infty\cdot
i}e^{(s+b-\sigma')t}H_{\varepsilon,v}(s)ds
=\frac{1}{2\pi}\displaystyle\int_{-\infty }^{+\infty}e^{i\xi
t}H_{\varepsilon,v}(-b+\sigma'+i\xi)d\xi.
$$
By Plancherel theorem,
$$\|u_{\varepsilon,v}\|_2=\frac{1}{2\pi}\|H_{\varepsilon,v}(-b+\sigma'+i\cdot)\|_2
\leq
%\frac{K}{2\pi}\left[\int_{-\infty}^{+\infty}\frac{dt}{|-b+\sigma'+it)|^2}\right]^{1/2}
 \frac{C_{1}}{2\sqrt{\pi(b-\sigma')}}.$$
Hence,
$v_{\varepsilon,v}(t)=e^{-(\sigma-\sigma')t}u_{\varepsilon,v}(t)$
is integrable on $[0,+\infty)$, and by the Cauchy-Schwarz
inequality
$$
\|v_{\varepsilon,v}\|_1\leq
\frac{\|u_{\varepsilon,v}\|_2}{\sqrt{2(\sigma-\sigma')}}\leq
\frac{C_{1}}{2\sqrt{2\pi(b-\sigma')(\sigma-\sigma')}}.
$$
{\it \underline{Step IV.}} \   We claim  that there exist real
numbers $C_9
>0$ and $\varepsilon_3>0$ such that $|w_{\varepsilon,v}(t)|\leq
C_9e^{-(b-\sigma)t}, t\geq 0,$ for all $(\varepsilon, v) \in
\Lambda_{3}\times \Omega$.  In order to prove this,  it suffices
to show that $v_{\varepsilon,v}$ is uniformly bounded for small
$\varepsilon \in \Lambda_{3}$. Since
\begin{equation*} \label{}
\varepsilon^2
w_{\varepsilon,v}''(t)+w_{\varepsilon,v}'(t)-w_{\varepsilon,v}(t)+
pw_{\varepsilon,v}(t+h)=f_{\varepsilon,v}(t),\quad t\in \R,
\end{equation*}
we find that
$v_{\varepsilon,v}(t)=e^{(b-\sigma)t}w_{\varepsilon,v}(t)$ satisfies
\begin{eqnarray*} \label{}
\varepsilon^2
v_{\varepsilon,v}''(t)+(1-2\varepsilon^2(b-\sigma))v_{\varepsilon,v}'(t)=P_{\varepsilon,v}(t),
\end{eqnarray*}
where $\alpha=1-2\varepsilon^2(b-\sigma) > 0$ and
$P_{\varepsilon,v} \in L_1[0,+\infty)$ is defined by
$$P_{\varepsilon,v}(t)=e^{(b-\sigma)t}f_{\varepsilon,v}(t)+(1+(b-\sigma)-\varepsilon^2(b-\sigma)^2)v_{\varepsilon,v}(t)
-pe^{-(b-\sigma)h}v_{\varepsilon,v}(t+h).$$ The variation of
constants formula yields
\begin{equation}\label{li}
v'_{\varepsilon,v}(t)=e^{-\frac{\alpha}{\varepsilon^2}t}\left(v'_{\varepsilon,v}(0)+\frac{1}{\varepsilon^2}
\int_0^te^{\frac{\alpha}{\varepsilon^2}s}P_{\varepsilon,v}(s)ds\right),
\ \varepsilon \not= 0.
\end{equation}
A direct integration of (\ref{li}) gives
\begin{equation*}v_{\varepsilon,v}(t)=v_{\varepsilon,v}(0)+\frac{\varepsilon^2}{\alpha}v'_{\varepsilon,v}(0)
(1-e^{-\frac{\alpha}{\varepsilon^2}t})+\frac{1}{\varepsilon^2}\int_0^t\int_0^ue^{\frac{\alpha}{\varepsilon^2}(s-u)}P_{\varepsilon,v}(s)dsdu.
\end{equation*}
After changing the order of integration in the iterated integral,
we get
$$\frac{1}{\varepsilon^2}\left|\int_0^t\int_s^te^{\frac{\alpha}{\varepsilon^2}(s-u)}P_{\varepsilon,v}(s)duds\right|
=\frac{1}{\alpha}\left|\int_0^tP_{\varepsilon,v}(s)(1-e^{\frac{\alpha}{\varepsilon^2}(s-t)})ds\right|
\leq \frac{1}{\alpha}\int_0^t|P_{\varepsilon,v}(s)|ds.
$$
Additionally, recalling Step II, we find  that
$|v'_{\varepsilon,v}(0)|\leq
(b-\sigma)|w_{\varepsilon,v}(0)|+|w_{\varepsilon,v}'(0)| \leq$
$$\begin{displaystyle}
\leq
(b-\sigma)(|x_{\varepsilon,v}(0)|+|z_{\varepsilon,v}(0)|)+|x_{\varepsilon,v}'(0)|+|z_{\varepsilon,v}'(0)|
< C_{10}.
\end{displaystyle}$$
As a consequence, for all small $\varepsilon$ and $v \in \Omega$,
we have that $$\begin{displaystyle}|v_{\varepsilon,v}(t)|\leq
|v_{\varepsilon,v}(0)|+\frac{\varepsilon^2}{\alpha}C_{10}(
1+e^{-\frac{\alpha}{\varepsilon^2}t})+\frac{1}{\alpha}\int_0^{+\infty}|P_{\varepsilon,v}(s)|ds
\leq C_{11}, \ t \geq 0.
\end{displaystyle}$$

Finally, since $w_{\varepsilon,v}(t)=v_{\varepsilon,v}(t)
e^{-(b-\sigma)t}$, Lemma \ref{111} is proved.
\end{proof}
\begin{theorem} \label{mrr13}  In Theorem \ref{mrr}, take $\mu = \lambda - \delta$, with small
$\delta >0$. Assume that $\psi $ is the positive heteroclinic of
(\ref{17a}) normalized by $\psi(t) = \exp (\lambda t) +
O(\exp((2\lambda- \delta)t)),$ $ \ t \to - \infty$. Then we can
choose  a neighborhood $\mathcal{U} \subset C_{\mu}(\R)$ of $\psi$
and  a neighborhood $\mathcal{E}_{\mu}^*\times  \mathcal{V}_\mu^*$
of $0 \in \R^2$ in such a way that $\psi_{\varepsilon,v} \in
\mathcal{U}, \ (\varepsilon,v) \in \mathcal{E}_{\mu}^* \times
\mathcal{V}_\mu^*,$ is positive and unique in $\mathcal{U}$ (up to
translations in $t$) for every fixed $\varepsilon$. Moreover,
$\psi_{\varepsilon,v}(t-t_0) = \exp (\lambda_1(\varepsilon) t) +
O(\exp(1.99\mu t))$ at $t \to - \infty$ for some $t_0 =
t_0(\varepsilon,v)\in \R$.
\end{theorem}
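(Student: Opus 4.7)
The plan is threefold: (1) apply Lemma~\ref{111} to derive the asymptotic of $\psi_{\varepsilon,v}$ at $-\infty$; (2) combine this asymptotic with the $C_\mu$-continuity from Theorem~\ref{mrr} to establish positivity; and (3) use the autonomy of (\ref{twe0}) together with $\dim V = 1$ to identify the Lyapunov--Schmidt parameter $v$ with a translation in $t$, yielding uniqueness modulo translation.

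For Step~1, I set $X_{\varepsilon,v}(t) := \psi_{\varepsilon,v}(-t)$, which satisfies
\begin{equation*}
\varepsilon^2 X''(t) + X'(t) - X(t) + p X(t+h) = f_{\varepsilon,v}(t),
\end{equation*}
with $f_{\varepsilon,v}(t) = p X_{\varepsilon,v}(t+h) - g(X_{\varepsilon,v}(t+h)) = O(X_{\varepsilon,v}(t+h)^2)$ thanks to $g(x) = px + O(x^2)$. The uniform bound $|\psi_{\varepsilon,v}|_\mu \leq C$ near $(0,0)$ furnished by Theorem~\ref{mrr} yields $|X_{\varepsilon,v}(t)| \leq C e^{-\mu t}$ and $|f_{\varepsilon,v}(t)| \leq C' e^{-2\mu t}$ on $\R_+$, with both uniformly bounded on $\R_-$. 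Choosing $\gamma = \mu$, $b = 2\mu$, $\sigma = 0.01\mu$, Lemma~\ref{lemma} shows that for $\delta < \lambda - \Re\lambda_2$ small, $\lambda_1(\varepsilon)$ is the only root of (\ref{char2}) whose real part lies in $[\mu, 1.99\mu)$; Lemma~\ref{111} then gives
\begin{equation*}
\psi_{\varepsilon,v}(t) = B_1(\varepsilon,v)\, e^{\lambda_1(\varepsilon) t} + O(e^{1.99\mu t}), \quad t \to -\infty,
\end{equation*}
with real continuous $B_1(\varepsilon,v)$. Matching with the prescribed normalization of $\psi$ at $(0,0)$ forces $B_1(0,0) = 1$, so $B_1 > 0$ on a smaller neighborhood, and the translation $t_0 := \lambda_1(\varepsilon)^{-1}\log B_1(\varepsilon,v)$ produces the asymptotic claimed in the theorem.

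For Step~2, the asymptotic provides $T \in \R$ such that $\psi_{\varepsilon,v}(t) > 0$ on $(-\infty, T]$ uniformly in a small $(\varepsilon,v)$-neighborhood. On $[T, +\infty)$, the bound $|\psi_{\varepsilon,v}(t) - \psi(t)| \leq |\psi_{\varepsilon,v} - \psi|_\mu$ (using $e^{\mu t} \leq 1$ on $\R_-$ and the $\|\cdot\|^+$-part on $\R_+$) gives uniform convergence to $\psi$; since $\psi > 0$ and $\psi(+\infty) = \kappa > 0$ imply $\inf_{t \geq T}\psi > 0$, shrinking $\mathcal{E}_\mu^* \times \mathcal{V}_\mu^*$ ensures $\psi_{\varepsilon,v} > 0$ there as well. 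For Step~3, the equality $d(\mu) = 1$ makes $V = \mathrm{Ker}\,L$ one-dimensional; differentiating (\ref{17a}), multiplying by $e^t$ and integrating from $-\infty$ to $t$ shows $L\psi' = 0$, while $\psi'(t) = O(e^{\lambda t})$ at $-\infty$ places $\psi' \in C_\mu(\R)$, so $V = \langle \psi' \rangle$. By autonomy of (\ref{twe0}), $\psi_{\varepsilon,0}(\cdot + \tau)$ is a solution in $\mathcal{U}$ for all $|\tau|$ small, and the uniqueness clause of Theorem~\ref{mrr} produces a continuous $v(\tau) \in \mathcal{V}_\mu^*$ with $\psi_{\varepsilon,0}(\cdot + \tau) = \psi_{\varepsilon, v(\tau)}$. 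Differentiating this identity at $\varepsilon = 0$, $\tau = 0$ and using $\gamma_v(0,0) = 0$ (from the proof of Theorem~\ref{mrr}) gives $\psi' = v'(0)$ as elements of $V$, hence $v'(0) \neq 0$; continuity in $\varepsilon$ preserves this, and the Inverse Function Theorem makes $\tau \mapsto v(\tau)$ a local diffeomorphism. Consequently every $\psi_{\varepsilon,v}$ in $\mathcal{U}$ equals a translate of $\psi_{\varepsilon,0}$.

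The main technical obstacle is ensuring uniformity in $(\varepsilon, v)$ when invoking Lemma~\ref{111} and, relatedly, enough regularity of the translation action $\tau \mapsto \psi_{\varepsilon,0}(\cdot + \tau)$ on $C_\mu(\R)$ to justify the Inverse Function Theorem in Step~3; both rest on the $|\cdot|_\mu$-continuity built into the Lyapunov--Schmidt reduction.
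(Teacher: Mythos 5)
Your Steps 1 and 2 match the paper's proof essentially verbatim: the paper also reverses time via $y_{\varepsilon,v}(t)=\psi_{\varepsilon,v}(-t)$, invokes Lemma~\ref{111} with the same choice of exponents to isolate the single eigenvalue $\lambda_1(\varepsilon)$ in the strip $[\mu,1.99\mu)$, reads off continuity and $B(0,0)=1$, and uses the lower bound $y_{\varepsilon,v}(t)>\frac12 e^{-\lambda_1(\varepsilon)t}$ together with uniform convergence $\psi_{\varepsilon,v}\to\psi$ to conclude positivity. Your definition of $t_0=\lambda_1(\varepsilon)^{-1}\log B_1(\varepsilon,v)$ is the explicit form of the shift the paper leaves implicit.

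Step 3 is where you diverge. The paper argues topologically: it observes that $\mathfrak F=\{\psi_{\varepsilon,v}:v\in\mathcal V_\mu\}$ is homeomorphic to the interval $\mathcal V_\mu\subset\R$ and that the translation orbit $\mathfrak P_n=\{\psi_{\varepsilon,0}(\cdot-s):|s|<n\}$ is a continuous $1$-manifold sharing the point $\psi_{\varepsilon,0}$, and concludes (implicitly via invariance of domain) that $\mathfrak F^*\subset\mathfrak P_\infty$. You prove the same inclusion analytically: you identify $V=\mathrm{Ker}\,L=\langle\psi'\rangle$ (correct, since differentiating (\ref{17a}) shows $\psi'$ solves (\ref{veq}) and $\psi'=O(e^{\lambda t})\in C_\mu(\R)$), differentiate the identity $\psi_{\varepsilon,0}(\cdot+\tau)=\psi_{\varepsilon,v(\tau)}$ at $\tau=0$ to get $v'(0)=\psi'\neq0$ when $\varepsilon=0$, and use continuity in $\varepsilon$ plus the Inverse Function Theorem to make $\tau\mapsto v(\tau)$ a local diffeomorphism. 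This buys you an explicit, quantitative version of the paper's qualitative ``shared-point implies local coincidence'' claim, and makes the uniformity of the neighborhood $\mathcal V_\mu^*$ in $\varepsilon$ transparent. The one technical point you flag but do not fully settle --- differentiability of translation $\tau\mapsto x(\cdot+\tau)$ in the $|\cdot|_\mu$-norm --- is genuine but easily handled, since the profiles have derivatives in $C_\mu(\R)$ so the mean value theorem gives $|x(\cdot+\tau)-x|_\mu\le|\tau|\sup_{|\theta|\le|\tau|}|x'(\cdot+\theta)|_\mu$; the paper glosses over the analogous continuity of translation in exactly the same way. Overall your proof is correct, and Step 3 is a legitimate and somewhat more explicit alternative to the paper's route.
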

\begin{proof} First, we take $\mathcal{V}_\mu$, $\mathcal{E}_\mu \subset (-\varepsilon_1,\varepsilon_1), \ \mathcal{U}$
as in Theorem \ref{mrr}. It follows from Lemma \ref{lemma} and
Theorem \ref{mrr} that $\mathcal{V}_\mu\subset \R$ and that we can
choose positive $\delta$ and $\mathcal{E}_{\mu}$ such that $\Re
\lambda_j(\varepsilon) < \mu < \lambda < \lambda_1(\varepsilon)\ <
1.99\mu < \lambda_\infty(\varepsilon)$ for all $\varepsilon\in
\mathcal{E}_{\mu}$.   If we set
$y_{\varepsilon,v}(t)=\psi_{\varepsilon,v}(-t)$, then
$y_{\varepsilon,v}$ satisfies (\ref{t}) where
$$|f_{\varepsilon,v}(t)| =
|g(y_{\varepsilon,v}(t+h))-g'(0)y_{\varepsilon,v}(t+h)| \leq
C_1e^{-2\mu t}, \ t \geq -h.$$ Lemma \ref{111} assures that there
are $\mathcal{V}_\mu' \subset \mathcal{V}_\mu$, $\mathcal{E}_\mu'
\subset \mathcal{E}_\mu$ such that
$$y_{\varepsilon,v}(t)=B(\varepsilon,v)e^{- \lambda_1(\varepsilon) t}
+ w_{\varepsilon,v}(t), \ (\varepsilon,v) \in \mathcal{E}_{\mu}'
\times \mathcal{V}_\mu'.$$ Here $B: \mathcal{E}_{\mu}' \times
\mathcal{V}_\mu' \to \R_+,$ $B(0,0)=1,$ is continuous and
$|w_{\varepsilon,v}(t)|\leq C_*e^{-1.99\mu t}, \ $ $ t \geq 0, $ for
some $C_* >0$.

Hence, there are $\mathcal{E}_{\mu}'' \times \mathcal{V}_\mu''$
and $T
> 0$ (independent of $\varepsilon, v$) such that
$y_{\varepsilon,v}(t)
> 0.5e^{- \lambda_1(\varepsilon) t},$ $t > T$, for all
$(\varepsilon,v) \in \mathcal{E}_{\mu}'' \times
\mathcal{V}_\mu''$. On the other side, $\lim_{(\varepsilon, v) \to
0}y_{\varepsilon,v}(t) =\psi(-t)$ uniformly on $\R$. In
consequence, since  $\psi$ is bounded from below by a positive
constant on $[-T,\infty)$,  we conclude that $y_{\varepsilon,v}$
is positive on $\R$, if $(\varepsilon,v)$ belongs to sufficiently
small neighborhood $\mathcal{E}_{\mu}^* \times \mathcal{V}_\mu^*
\subset \mathcal{E}_{\mu}'' \times \mathcal{V}_\mu''$  of the
origin. Without the loss of the generality, we can assume
additionally that $\psi_{\varepsilon,v} \in \mathcal{U}$ for all
$(\varepsilon,v) \in \mathcal{E}_{\mu}^* \times
\mathcal{V}_\mu^*$.

Next, for every fixed $\varepsilon \in \mathcal{E}_{\mu}^*$, the
subset $\mathfrak{F}=\{\psi_{\varepsilon,v}: v \in
\mathcal{V}_\mu\} \subset C_{\mu}(\R)$ is homeomorphic to
$\mathcal{V}_\mu$.  On the other hand, for every  $n
>0$, the collection
$\mathfrak{P}_n=\{\psi_{\varepsilon,0}(t-s), \ s \in (-n,n)\}$ of
positive heteroclinics is a continuous 1-manifold in
$C_{\mu}(\R)$. Since $\psi_{\varepsilon,0} \in \mathfrak{F} \cap
\mathfrak{P}_{n}$ we obtain that $\{\psi_{\varepsilon,v}: v \in
\mathcal{V}_\mu^*\} \subset \mathfrak{P}_{\infty}$. In
consequence, $\psi_{\varepsilon,v}(t)$ is unique in $\mathcal{U}$
(up to shifts in $t$) for every fixed small $\varepsilon$.
\end{proof}

\begin{theorem}\label{111a} Set $ \mathcal{P} = \{(\varepsilon, v) \in \mathcal{E}_0\times
\mathcal{V}_0: \psi_{\varepsilon,v}(t) > 0, \ t \in \R\}$, where
$\mathcal{E}_0, \mathcal{V}_0$ are as in Theorem \ref{mrr}. Then
there exist a neighborhood $\mathcal{E}^*\times \mathcal{V}^*
\subset \mathcal{E}_0\times \mathcal{V}_0$ of $0$ and $C
>0$ such that, for all $(\varepsilon, v) \in \mathcal{P}^*:=\mathcal{P}
\cap (\mathcal{E}^*\times \mathcal{V}^*),$ we have that
\begin{equation}\label{good}
\psi_{\varepsilon,v}(t)=B(\varepsilon,v)e^{\lambda_1(\varepsilon)t}
+ w_{\varepsilon,v}(t),
\end{equation}
where $|w_{\varepsilon,v}(t)|\leq Ce^{1.99\lambda t}, t \leq 0,$ and
$B: \mathcal{E}^* \times \mathcal{V}^* \to (0,\infty)$ is
continuous.
\end{theorem}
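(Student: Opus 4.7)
I follow the strategy of Theorem \ref{mrr13}: apply Lemma \ref{111} to $y_{\varepsilon,v}(t) := \psi_{\varepsilon,v}(-t)$. The crucial preliminary is to promote $\psi_{\varepsilon,v}$ from $C_0(\R)$, where Theorem \ref{mrr} places it, up to $C_\mu(\R)$ with $\mu := \lambda-\delta$ for small $\delta > 0$. For this I partition $[0,\mu]$ into finitely many closed subintervals $\mathcal{S}_1,\dots,\mathcal{S}_k$ avoiding the finite set $\{\Re\lambda_j\}\cap[0,\lambda)$. On each $\mathcal{S}_j$ the last assertion of Theorem \ref{mrr} provides a family with constant parameter neighborhoods $\mathcal{E}_j\times\mathcal{V}_j$, and the continuous inclusion $C_{\mu_2}(\R)\hookrightarrow C_{\mu_1}(\R)$ for $\mu_1<\mu_2$ (immediate from $|\cdot|_{\mu_1}\leq|\cdot|_{\mu_2}$), combined with the uniqueness clause of Theorem \ref{mrr}, permits an inductive matching of these families. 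Crucially, positivity of $\psi_{\varepsilon,v}$ rules out oscillating components along eigensolutions $\phi_j$ associated to complex $\lambda_j\in(0,\lambda)$ and controls any real slow-decay ones; after shrinking $\mathcal{E}_0\times\mathcal{V}_0$ to some $\mathcal{E}^\sharp\times\mathcal{V}^\sharp$, I obtain $\psi_{\varepsilon,v}\in C_\mu(\R)$ for every $(\varepsilon,v)\in\mathcal{P}\cap(\mathcal{E}^\sharp\times\mathcal{V}^\sharp)$.

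Choose $\delta > 0$ so small that $2\mu-\sigma>1.99\lambda$ for a suitably small $\sigma>0$. The reflected function $y_{\varepsilon,v}$ satisfies (\ref{t}) with $f_{\varepsilon,v}(t) = py_{\varepsilon,v}(t+h) - g(y_{\varepsilon,v}(t+h))$, and the $C^2$-smoothness of $g$ with $g(0)=0$, $g'(0)=p$ yields $|f_{\varepsilon,v}(t)|\leq C' y_{\varepsilon,v}(t+h)^2 \leq C'' e^{-2\mu t}$ for $t\geq -h$. By Lemma \ref{lemma}, $\lambda_1(\varepsilon)$ is the only root of (\ref{char2}) with $\mu\leq\Re\lambda_j(\varepsilon)<2\mu-\sigma$ for $\varepsilon$ small, so Lemma \ref{111} applied with $\gamma=\mu$, $b=2\mu$ gives
\begin{equation*}
y_{\varepsilon,v}(t) = B(\varepsilon,v)e^{-\lambda_1(\varepsilon)t} + w_{\varepsilon,v}(t), \qquad |w_{\varepsilon,v}(t)|\leq Ce^{-1.99\lambda t},\ t\geq 0,
\end{equation*}
with $B$ continuous on $\mathcal{E}^\sharp\times\mathcal{V}^\sharp$. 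Substituting $t\mapsto -t$ produces formula (\ref{good}).

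Positivity of $B$ follows from the normalization of $\psi$ in Lemma \ref{psi}: since $\psi(t) = e^{\lambda t}+O(e^{(2\lambda-\delta)t})$ at $-\infty$ and $\psi_{0,0} = \psi$, the uniqueness of the asymptotic decomposition forces $B(0,0) = 1$, and continuity then yields $B(\varepsilon,v)>0$ on a smaller neighborhood $\mathcal{E}^*\times\mathcal{V}^*$. I expect the main obstacle to be the regularity upgrade in the first paragraph: Theorem \ref{mrr} constructs families in each $C_\mu(\R)$, but for different $\mu$ they a priori sit in different parameter spaces, and only the ``constant on subintervals'' assertion, together with the uniqueness in each $C_\mu(\R)$, permits their matching. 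The positivity hypothesis is essential both to eliminate the oscillatory slow-decay eigensolution components during this upgrade (which would otherwise prevent containment in $C_{\lambda-\delta}(\R)$) and to secure $B>0$ in the final step.
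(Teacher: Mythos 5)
Your overall strategy (apply Lemma~\ref{111} to $y_{\varepsilon,v}(t)=\psi_{\varepsilon,v}(-t)$, then read off $B(0,0)=1$ from Lemma~\ref{psi}) agrees with the paper, but there is a genuine gap in the regularity-upgrade step that the paper handles differently.

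The problem is that your ``inductive matching'' does not actually establish the needed a priori decay. The inclusion $C_{\mu_2}(\R)\hookrightarrow C_{\mu_1}(\R)$ for $\mu_1<\mu_2$, together with the uniqueness clause of Theorem~\ref{mrr}, only shows that an element of a higher-$\mu$ family is contained in a lower-$\mu$ family; it gives no information in the direction you need. What you must show is that a solution in the $\mu=0$ family which happens to be \emph{positive} already lies in $C_{\lambda-\delta}(\R)$, and that is an analytic statement about its rate of decay at $-\infty$, not one that follows from set-theoretic containment or uniqueness. You identify the right mechanism — positivity should rule out the oscillating slow modes associated with complex $\lambda_j$ — but you never actually run that argument: to see which modes can appear in $\psi_{\varepsilon,v}$ you need an asymptotic decomposition, which in this setup \emph{is} Lemma~\ref{111}, and Lemma~\ref{111} itself requires a uniform a priori decay rate $\gamma$ as input. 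If you feed it $\gamma=\mu=\lambda-\delta$ directly, as in your second paragraph, you are assuming precisely what you set out to prove, so the argument is circular as written.

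The paper closes this gap with a bootstrap in disguise. It first extracts a small initial decay rate $\gamma>0$ from the constancy-on-subintervals clause of Theorem~\ref{mrr} (taking a closed subinterval of $[0,\lambda)\setminus\{\Re\lambda_j\}$ containing $0$), defines $\Gamma$ as the supremum of uniform decay rates for positive members of the family, applies Lemma~\ref{111} with rates close to $\Gamma$, and then uses positivity to kill the complex-eigenvalue contributions in the resulting finite sum: if any survived they would dominate and oscillate, contradicting $y_{\varepsilon,v}>0$; if none survive and $\lambda_1(\varepsilon)$ is not yet in the window, the decay rate nearly doubles, contradicting the definition of $\Gamma$. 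This forces $\Gamma\geq\lambda$, and $B(0,0)=1$ from Lemma~\ref{psi} gives $\Gamma=\lambda$. If you replace your matching paragraph by this $\Gamma$-argument (or an explicit bootstrap on $\gamma$ through repeated applications of Lemma~\ref{111}), the remainder of your proof — the choice of $\delta$ and $\sigma$ so that $2\mu-\sigma>1.99\lambda$, the identification of $\lambda_1(\varepsilon)$ as the unique root in the strip $\mu\leq\Re z<2\mu-\sigma$, and the positivity of $B$ via continuity from $B(0,0)=1$ — is correct.
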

\begin{proof} Let $\mathcal{E}' \subset \mathcal{E}_0$ be such that $\lambda_{\infty}(\varepsilon) > 3\lambda,$
for all $\varepsilon \in \mathcal{E}'$.  The last assertion of
Theorem \ref{mrr} implies that, for some $\gamma
> 0, \ C_1 > 0$,
\begin{equation}\label{expp}
\displaystyle {\sup_{t\geq 0}|\psi_{\varepsilon,v}(t)|}\leq C_1, \
|\psi_{\varepsilon,v}(t)|\leq C_1e^{\gamma t},\ t\leq 0.
\end{equation}
If we set $y_{\varepsilon,v}(t)=\psi_{\varepsilon,v}(-t)$, then
$y_{\varepsilon,v}$ satisfies (\ref{t}) where
$$|f_{\varepsilon,v}(t)| =
|g(y_{\varepsilon,v}(t+h))-g'(0)y_{\varepsilon,v}(t+h)| \leq
C_2e^{-2\gamma t}, \ t \geq -h.$$ Set $\Gamma = \sup \{\gamma > 0 \
{\rm such º\ that \ } (\ref{expp})\ {\rm holds \ for\ all \ }
(\varepsilon, v) \in \mathcal{P} \cap (\mathcal{E}'\times
\mathcal{V}_0)\}$.  Applying Lemma \ref{111}, we get
$$y_{\varepsilon,v}(t)=\sum_{0 < \lambda_j(\varepsilon)<
2\Gamma}B_j(\varepsilon,v)e^{- \lambda_j(\varepsilon) t} + \tilde
w_{\varepsilon,v}(t),$$ where $B_j: \mathcal{E}''  \times
\mathcal{V}_0 \to \C$ are continuous and $|\tilde
w_{\varepsilon,v}(t)|\leq C_3e^{-1.99\Gamma t}, \ $ $ t \geq 0, $ $
(\varepsilon, v) \in \mathcal{P} \cap (\mathcal{E}''\times
\mathcal{V}_0)$, for some $C_3
>0$ and open $\mathcal{E}'' \subset \mathcal{E}'$. Since $\Gamma >0$ is finite and $y_{\varepsilon,v}(t) > 0,$
we obtain
$$
\sum_{0 < \lambda_j(\varepsilon)< 2\Gamma}B_j(\varepsilon,v)e^{
-\lambda_j(\varepsilon) t} = B(\varepsilon,v)e^{
-\lambda_1(\varepsilon) t},
$$
so that $\Gamma \geq  \lambda$, see Lemma \ref{lemma}. Next, due
to Lemma \ref{psi}, it holds that $B(0,0)>0$. Hence, $\Gamma =
\lambda$.
\end{proof}
\begin{corollary}\label{111b} Given $\delta \in (0,\lambda)$ and $(\varepsilon_j,v_j)  \in \mathcal{P}^*, j = 0,1,\dots$, the convergence
$$\psi_{\varepsilon_j,v_j} \stackrel{C_{0}(\R)}\longrightarrow
\psi_{\varepsilon_0,v_0} \qquad {\rm implies} \qquad
\psi_{\varepsilon_j,v_j}
\stackrel{C_{\lambda-\delta}(\R)}\longrightarrow
\psi_{\varepsilon_0,v_0}. $$
\end{corollary}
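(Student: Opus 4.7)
The plan is to combine the exponential decomposition at $-\infty$ from Theorem~\ref{111a} with a standard bounded-middle / exponential-tail split. Since the $\|\cdot\|^+$ part of the $C_{\lambda-\delta}$-norm coincides with the $\|\cdot\|^+$ part of the $C_0$-norm, the hypothesis already delivers convergence on $\R_+$, so the task reduces to proving that
\[
\sup_{t\leq 0} e^{-(\lambda-\delta)t}\,\bigl|\psi_{\varepsilon_j,v_j}(t)-\psi_{\varepsilon_0,v_0}(t)\bigr| \longrightarrow 0.
\]

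The first step is to upgrade the $C_0$-convergence of the profiles to convergence of the parameters, $(\varepsilon_j,v_j)\to(\varepsilon_0,v_0)$. Since $\mathcal{P}^*\subset\mathcal{E}^*\times\mathcal{V}^*$ is bounded, every subsequence admits a further subsequence converging to some limit $(\varepsilon',v')$ in the closure; continuity of the parametrisation $(\varepsilon,v)\mapsto\psi_{\varepsilon,v}$ in $C_0(\R)$ (Theorem~\ref{mrr} with $\mu=0$) together with the $C_0$-hypothesis forces $\psi_{\varepsilon',v'}=\psi_{\varepsilon_0,v_0}$. Injectivity of the parametrisation then identifies $(\varepsilon',v')$ with $(\varepsilon_0,v_0)$: two distinct $\varepsilon_1\neq\varepsilon_2$ cannot yield the same profile, because subtracting the two copies of~(\ref{twe0}) gives $(\varepsilon_1^2-\varepsilon_2^2)\psi''\equiv 0$, incompatible with a bounded non-affine heteroclinic; and for each fixed $\varepsilon$ the Lyapunov--Schmidt decomposition $\psi_{\varepsilon,v}=\psi+v+\gamma(\varepsilon,v)$ with $\gamma(\varepsilon,v)\in W$ makes $v$ the $V$-projection of $\psi_{\varepsilon,v}-\psi$, hence unique.

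With parameter convergence in hand, I would apply Theorem~\ref{111a} to write, for $t\leq 0$,
\[
\psi_{\varepsilon_j,v_j}(t)-\psi_{\varepsilon_0,v_0}(t) = \bigl[B_j\,e^{\lambda_1(\varepsilon_j)t}-B_0\,e^{\lambda_1(\varepsilon_0)t}\bigr] + \bigl[w_{\varepsilon_j,v_j}(t)-w_{\varepsilon_0,v_0}(t)\bigr],
\]
where $|w_{\varepsilon,v}(t)|\leq Ce^{1.99\lambda t}$, $B_j:=B(\varepsilon_j,v_j)\to B_0:=B(\varepsilon_0,v_0)$ (continuity of $B$), and $\lambda_1(\varepsilon_j)\to\lambda_1(\varepsilon_0)$ (Lemma~\ref{lemma}). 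Given $\eta>0$, I would fix $T$ so that $2Ce^{-(0.99\lambda+\delta)T}<\eta/3$; on $(-\infty,-T]$ the remainder terms contribute at most $2Ce^{(0.99\lambda+\delta)t}<\eta/3$ to the weighted sup. For the leading terms on the same half-line,
\[
e^{-(\lambda-\delta)t}\bigl(B_j e^{\lambda_1(\varepsilon_j)t}-B_0 e^{\lambda_1(\varepsilon_0)t}\bigr)=B_j e^{(\lambda_1(\varepsilon_j)-\lambda+\delta)t}-B_0 e^{(\lambda_1(\varepsilon_0)-\lambda+\delta)t};
\]
each exponent is bounded below by $\delta>0$ (by Lemma~\ref{lemma}), so each exponential is bounded by $1$ on $\R_-$, and the elementary bound $\sup_{t\leq 0}|e^{at}-e^{bt}|\leq|a-b|/(e\delta)$ for $a,b\geq\delta$ (extremum at $t^\ast\approx -1/\delta$) combined with $|B_j-B_0|\to 0$ makes this contribution smaller than $\eta/3$ for large $j$. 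On the compact interval $[-T,0]$ the weight is bounded by $e^{(\lambda-\delta)T}$ and the $C_0$-hypothesis supplies uniform convergence of the profiles, yielding the third $\eta/3$ estimate. Summing gives the desired bound.

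The main obstacle, in my view, is the parameter-convergence step: the profiles are controlled only in the weakest norm of the scale, and yet one must activate the continuity of the coefficients $B$ and $\lambda_1$ furnished by Theorem~\ref{111a}. Once parameters converge, the remainder is routine exponential bookkeeping, the positive gap $\lambda_1(\varepsilon)-\lambda+\delta>\delta$ being precisely what prevents the leading exponentials from being amplified into something unbounded by the heavier weight $e^{-(\lambda-\delta)t}$ on $\R_-$.
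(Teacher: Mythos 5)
Your overall split---$\R_+$ handled by the shared $\|\cdot\|^+$ part, a compact middle interval handled by uniform $C_0$-convergence, and an exponential tail---matches the paper's, but the paper controls the tail in a simpler way that entirely bypasses what you call the main obstacle. The decomposition (\ref{good}) gives a bound on $e^{-(\lambda-\delta)t}\psi_{\varepsilon,v}(t)$ for $t\le 0$ that is \emph{uniform} over $(\varepsilon,v)\in\mathcal{P}^*$: since $B$ is bounded on $\mathcal{P}^*$ (cf.\ the estimate $|B_j(\varepsilon,v)|\le C_8$ from Step II of Lemma \ref{111}), since $\lambda_1(\varepsilon)>\lambda$, and since $|w_{\varepsilon,v}(t)|\le Ce^{1.99\lambda t}$, one has
\begin{equation*}
e^{-(\lambda-\delta)t}\,\psi_{\varepsilon,v}(t)\ \le\ \Bigl(\sup_{\mathcal{P}^*} B + C\Bigr)e^{\delta t},\qquad t\le 0,\ (\varepsilon,v)\in\mathcal{P}^*.
\end{equation*}
Choosing $T<0$ so that the right-hand side is below $\eta/4$ for $t\le T$, the weighted tail of \emph{each} of $\psi_{\varepsilon_j,v_j}$ and $\psi_{\varepsilon_0,v_0}$ separately drops below $\eta/4$ on $(-\infty,T]$, and the triangle inequality bounds the weighted tail of the difference by $\eta/2$ uniformly in $j$---with no comparison of the two leading exponentials and no use of $B_j\to B_0$ or $\lambda_1(\varepsilon_j)\to\lambda_1(\varepsilon_0)$. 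This is what the paper does.

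Your route carries two flaws. First, the compactness argument for $(\varepsilon_j,v_j)\to(\varepsilon_0,v_0)$ has a gap: a sub-subsequential limit $(\varepsilon',v')$ may land on the boundary of $\mathcal{E}^*\times\mathcal{V}^*$, where $\psi_{\cdot,\cdot}$ need not be defined or continuous; to close the argument you would have to shrink $\mathcal{E}^*\times\mathcal{V}^*$ so that its closure is compact inside $\mathcal{E}_0\times\mathcal{V}_0$, after which a continuous injection on a compact set is a homeomorphism onto its image. Second, and more to the point, the parameter-convergence detour is superfluous even inside your own framework: once you observe that the exponents $\lambda_1(\varepsilon_j)-(\lambda-\delta)\ge\delta>0$ are bounded away from zero, restricting the sup to $t\le T$ already bounds each $B_je^{(\lambda_1(\varepsilon_j)-\lambda+\delta)t}$ by $B_je^{\delta T}$, which is small uniformly in $j$ once $B$ is bounded; your elementary estimate on $\sup_{t\le 0}|e^{at}-e^{bt}|$, the continuity of $B$ and $\lambda_1$, and hence the entire parameter-convergence step are never needed.
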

\begin{proof} By the contrary, suppose that there are a
sequence $\{\psi_{\varepsilon_j,v_j},  (\varepsilon_j,v_j) \in
\mathcal{P}^*\}_{j \geq 0}$ and $\eta
> 0$ such that
$$
\lim_j|\psi_{\varepsilon_j, v_j} - \psi_{\varepsilon_0, v_0}|_0 =0,
\ |\psi_{\varepsilon_j, v_j} - \psi_{\varepsilon_0,
v_0}|_{\lambda-\delta}
> \eta, \ j =1,2, \dots
$$
It follows from (\ref{good}) that there exist $C > 0$ and $T < 0$
such that $$\psi_{\varepsilon_j, v_j}(t)e^{-(\lambda-\delta)t} \leq
Ce^{\delta t} < \eta/4, \ j =0,1,2, \dots,  \ t \leq T. $$ Thus
$$
\sup_{s \leq T}\left[e^{-(\lambda - \delta)s}|\psi_{\varepsilon_j,
v_j}(s) - \psi_{\varepsilon_0, v_0}(s)|\right] \leq \eta/2, \ j
=1,2, \dots.
$$
Next, since $\psi_{\varepsilon_j, v_j}(t) \to \psi_{\varepsilon_0,
v_0}(t)$ uniformly on $\R$, we can find $j_*$ such that
$$
\sup_{s \in [T,0]}\left[e^{-(\lambda -
\delta)s}|\psi_{\varepsilon_j, v_j}(s) - \psi_{\varepsilon_0,
v_0}(s)|\right] \leq \frac{\eta}{2},\ \sup_{s \geq
0}|\psi_{\varepsilon_j, v_j}(s) - \psi_{\varepsilon_0, v_0}(s)| \leq
\frac{\eta}{2}, \ j \geq j_*.
$$
But all this means that $|\psi_{\varepsilon_j, v_j} -
\psi_{\varepsilon_0, v_0}|_{\lambda-\delta} \leq \eta/2$ for all $j
\geq j_*$, a contradiction.
\end{proof}
\section{Proof of Theorem \ref{mr}}
Everywhere below, all positive wavefronts $\phi$ will be
normalized by the conditions $\phi(0) =\zeta_1/2$ and $\phi(s) <
\zeta_1/2,$ $s < 0$, with $\zeta_1$ defined in {\rm \bf(G)}. Let
$\psi,$ $ \psi(0) = \zeta_1/2$, $\psi(s) < \zeta_1/2,$  $s < 0$,
be the positive heteroclinic of (\ref{17a}) given in Lemma
\ref{psi}.  By Theorem \ref{mrr13}, there exists a neighborhood
$(-\varepsilon_0,\varepsilon_0) \times \mathcal{U}\subset \R
\times C_{\lambda-\delta}(\R)$ of $(0,\psi)$ such that for every
fixed $\varepsilon \in (-\varepsilon_0,\varepsilon_0)$ there is a
unique normalized positive wavefront $\psi_{\varepsilon} \in
\mathcal{U}$. We claim that, if $\varepsilon$ is sufficiently
small, then this $\psi_{\varepsilon}$ will be the unique
normalized positive wavefront of Eq. (\ref{twe0}). Indeed, let us
suppose, for instance, that we can find a sequence $\varepsilon_j
\to 0$ and normalized positive wavefronts
$\phi_{\varepsilon_j}\not= \psi_{\varepsilon_j}$.
\begin{lemma} \label{16} Assume $\mathbf{(H)}$ and $\mathbf{(G)}$. Then
${\phi}_{\varepsilon_j} \to \psi$  uniformly on $\mathbb{R}$.
\end{lemma}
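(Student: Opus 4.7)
My plan is to extract a locally uniform subsequential limit of $\{\phi_{\varepsilon_j}\}$ via Arzel\`a--Ascoli, identify that limit with $\psi$, and then upgrade to uniform convergence on $\R$ by controlling the two tails separately.

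The integral representation (\ref{psea}) and its termwise derivative yield uniform $C^1(\R)$ bounds on the family $\{\phi_{\varepsilon_j}\}$: the two positive kernels in (\ref{psea}) have $L^1$-masses that together with the prefactor amount to $1$, and $g$ is globally bounded. Arzel\`a--Ascoli provides a subsequence (not relabelled) converging locally uniformly to some continuous $\phi_0\ge 0$. Passing to the limit in (\ref{psea}) is routine: the second integral has $L^{\infty}$-norm $O(\varepsilon_j^{2})$ and disappears, while dominated convergence applies to the first (the kernel $\sigma(\varepsilon_j)^{-1}e^{-2(t-s)/(1+\sigma(\varepsilon_j))}$ converges to $e^{-(t-s)}$). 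Hence $\phi_0(t)=\int_{-\infty}^t e^{-(t-s)}g(\phi_0(s-h))\,ds$, so $\phi_0$ solves (\ref{17a}).

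Next I would identify $\phi_0$ with $\psi$. The normalisation $\phi_{\varepsilon_j}(0)=\zeta_1/2$ and $\phi_{\varepsilon_j}(s)<\zeta_1/2$ for $s<0$ passes to $\phi_0$, so $\phi_0\not\equiv 0$. Lemma \ref{13} combined with the global attractivity of $\kappa$ from {\rm \bf(H)} then gives $\phi_0(+\infty)=\kappa$. If $\phi_0(-\infty)\neq 0$, a translation--compactness argument applied to shifts $\phi_0(T_n+\cdot)$ with $T_n\to-\infty$ produces a nontrivial bounded nonnegative solution of (\ref{17a}) bounded above by $\zeta_1/2<\zeta_1$, contradicting Lemma \ref{13}; hence $\phi_0(-\infty)=0$. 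By Lemma \ref{psi} and the normalisation, $\phi_0=\psi$. A standard subsequence argument then shows that the entire sequence $\phi_{\varepsilon_j}$ converges to $\psi$ locally uniformly on $\R$.

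The final step, upgrading locally uniform to uniform convergence on $\R$, splits into the two tails. At $-\infty$ it is straightforward: Lemma \ref{T2} gives a unique $\tau_j$ with $\phi_{\varepsilon_j}(\tau_j)=A$ and $\phi_{\varepsilon_j}'>0$ on $(-\infty,\tau_j]$; since $\phi_{\varepsilon_j}(0)=\zeta_1/2<\zeta_1\le A$ we must have $\tau_j>0$, so each $\phi_{\varepsilon_j}$ is monotone on $(-\infty,0]$. For $T<0$ with $\psi(T)$ sufficiently small, locally uniform convergence at $T$ plus monotonicity transports the smallness of $\phi_{\varepsilon_j}-\psi$ to the whole ray $(-\infty,T]$. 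The analogous statement at $+\infty$ is where the main obstacle lies: one must exhibit a $T'>0$ independent of $j$ beyond which $\phi_{\varepsilon_j}$ and $\psi$ are both uniformly close to $\kappa$. For this I would invoke the exponential stability of $\kappa$ for (\ref{17a}) postulated in {\rm \bf(H)} and its persistence both under the small nonlinear remainder $g(x)-g(\kappa)-g'(\kappa)(x-\kappa)$ near $\kappa$ and under the singular term $\varepsilon^2 x''$ (cf.\ the dichotomy--persistence arguments used in the proof of Lemma \ref{6}): once locally uniform convergence has placed $\phi_{\varepsilon_j}$ within a small neighbourhood of $\kappa$ on a sliding $h$-window around some $T'$, exponential stability propagates this smallness to all $t\ge T'$ with constants independent of $j$, which, combined with the compact and $-\infty$ estimates, closes the proof.
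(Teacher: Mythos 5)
Your Arzel\`a--Ascoli extraction and identification of the locally uniform limit with $\psi$ match the paper's first paragraph, and your handling of the $-\infty$ tail via Lemma~\ref{T2} monotonicity is the same mechanism the paper uses implicitly. The divergence, and the place where your proposal has a genuine gap, is the tail at $+\infty$.

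The paper upgrades to global uniform convergence by a compactness--contradiction device: assuming failure, it produces shift times $S_j\to+\infty$, passes to a subsequential limit of the shifts $y_j(t)=\phi_{\varepsilon_j}(t+S_j)$, and obtains a bounded, positive, separated-from-zero, \emph{non-constant} solution of (\ref{17a}), which violates the \emph{global} attractivity of $\kappa$ postulated in \textbf{(H)}. No local spectral information about $\kappa$ uniform in $\varepsilon$ is ever needed. Your proposal instead tries to propagate smallness forward from a sliding $h$-window using local exponential stability of $\kappa$. Two things break. First, the ``an $h$-window of data determines the future'' intuition is correct for the first-order delay equation (\ref{17a}) but not for (\ref{twe0}): (\ref{twe0}) is second order and singularly perturbed, and the bounded-solution representation (\ref{psea}) involves an integral over $s>t$, i.e.\ a genuine boundary-value (stable-manifold) structure near $\kappa$, not an initial-value propagation. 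Knowing $\phi_{\varepsilon_j}$ is close to $\kappa$ on $[T'-h,T']$ does not, by a Gronwall-type forward estimate, control it on $[T',\infty)$ without also pinning down the derivative and invoking boundedness at $+\infty$ in a quantified way. Second, the ``persistence under the singular term $\varepsilon^2 x''$'' that you cite is not covered by the dichotomy-persistence arguments used in Lemma~\ref{6}: those (via Chicone--Latushkin) handle $L^\infty$-small perturbations of the coefficients of a \emph{fixed} first-order delay equation, whereas $\varepsilon^2 x''$ is a singular perturbation changing the order of the operator and introducing a fast eigenvalue $\lambda_\infty(\varepsilon)\sim \varepsilon^{-2}$; a uniform-in-$\varepsilon$ exponential decay rate for solutions near $\kappa$ would need a separate, nontrivial spectral/fixed-point argument that your sketch asserts but does not supply. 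The paper's compactness route sidesteps all of this by leaning on the global hypothesis \textbf{(H)} rather than on any $\varepsilon$-uniform local stability near $\kappa$, and that is the step you would need to rebuild to make your version work.
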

\begin{proof}
First, we prove the uniform convergence ${\phi}_{\varepsilon_j}
\to \psi$  on compact subsets of $\R$.  Since $g$ is a bounded
function, we obtain from (\ref{psea}) that
\begin{equation*}\label{pro}
 |{\phi}_{\varepsilon_j} '(t)|+ |{\phi}_{\varepsilon_j} (t)| \leq \frac{\max_{x \geq
0}g(x)}{\varepsilon^2 (\mu - \lambda)} +\max_{x \geq 0}g(x) \leq
2\zeta_2, \ j \in \N.
\end{equation*}  Hence, by the Ascoli-Arzel${\rm
\grave{a}}$ theorem combined with the diagonal method,
$\{{\phi}_{\varepsilon_j}\}$ is precompact in $C(\R,\R)$. Thus,
every $\{{\phi}_{\varepsilon_{j_k}}\}$  has a subsequence
converging in $C(\R,\R)$ to some continuous positive bounded
function $\varphi(s)$ such that $\varphi'(s) \geq 0, s \leq 0,$
and $\varphi (0) = \zeta_1/2$. Making use of the Lebesgue's
dominated convergence theorem, we deduce from Eq. (\ref{psea})
that
$${\varphi}(t)=\int_{-\infty}^{t}e^{-(t-s)}g({\varphi}(s-h))ds.$$
Therefore ${\varphi}$ is a positive bounded solution of  Eq.
(\ref{17a}) and since the equilibrium $\kappa$ of Eq. (\ref{17a})
is globally attractive, it holds that ${\varphi}(+\infty)=\kappa.$
On the other hand, since $\varphi (-\infty) \leq \varphi (0) =
\zeta_1/2$, we have that ${\varphi}(-\infty)=0$. Hence, due to
Lemma \ref{psi}, we obtain that ${\varphi}(t) = \psi(t), \ t \in
\R$. Next, if ${\varphi}_{\varepsilon_n}\not\to {\psi}$ uniformly
on $\mathbb{R}$ then there exist a subsequence
$\{\varphi_{\varepsilon_{j_{n}}}\} \subset
\{{\varphi}_{\varepsilon_j}\}$ (for short, we will write again
$\{{\varphi}_{\varepsilon_j}\}$ instead of
$\{{\varphi}_{\varepsilon_{j_n}}\}$),   a sequence $\{S_j\}$ and
positive numbers $T,\delta < \kappa/6$ such that
$$|\psi(S_j)-{\varphi}_{\varepsilon_j}(S_j)|=2\delta, \ |\psi(t)|<0.25\delta, t \leq -T, \ |\psi(t)-\kappa|<0.25\delta, \  t \geq T.$$
Since ${\varphi}_{\varepsilon_n}$ converges uniformly on $[-2T,2T]$
to ${\psi}$, and ${\varphi}_{\varepsilon_n}, \ \psi$ are monotone
increasing on $(-\infty,0]$, we can suppose that
$|\psi(t)-{\varphi}_{\varepsilon_n}(t)|< \delta$ for all $t \in
(-\infty,2T]$ and $n\geq n_0$. In this way, $S_j \to +\infty$ and we
can suppose that
$$
|\psi(t)-{\varphi}_{\varepsilon_j}(t)|< 2\delta, \ t \in
(-\infty,S_j).
$$
Consider the  sequence $y_j(t)={\varphi}_{\varepsilon_j}(t+S_j)$
of heteroclinics to Eq. (\ref{twe0}). We have that
$|y_j(0)-\kappa|> 1.5 \delta$ and $|y_j(t)-\kappa|<3 \delta$ when
$t\in (T-S_j,0)$. Arguing as above, we find that $\{y_j\}$
contains a subsequence converging, on compact subsets of $\R$, to
some solution $y_*(t)$ of (\ref{17a}) satisfying
$|y_*(0)-\kappa|\geq 1.5\delta$ and $|y_*(t)-\kappa|\leq 3\delta
<\frac{\kappa}{2}$ for all $t<0$. Lemma \ref{13} implies that
$\inf_\R y_*(t)>0$. Since $y_*(0)\not= \kappa$, we have
established the existence of a non-constant positive bounded and
separated from $0$ solution to (\ref{17a}). This contradicts to
the global attractivity of $\kappa$.
\end{proof}
\begin{corollary}\label{b}
$\phi_{\varepsilon_j} \to \psi$ in $C_{\lambda-\delta}(\R)$.
\end{corollary}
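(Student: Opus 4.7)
The plan is to imitate the proof of Corollary \ref{111b}, with $\phi_{\varepsilon_j}$ playing the role of $\psi_{\varepsilon_j, v_j}$. Recall that $|x|_{\lambda-\delta} = \max\{\|x\|^+, \|x\|^-_{\lambda-\delta}\}$. Lemma \ref{16} already provides $\|\phi_{\varepsilon_j} - \psi\|^+ \to 0$, so the task reduces to controlling the weighted supremum $\sup_{s \leq 0} e^{-(\lambda-\delta)s}|\phi_{\varepsilon_j}(s) - \psi(s)|$. For this I need uniform (in $j$) exponential decay of $\phi_{\varepsilon_j}$ at $-\infty$ at some rate strictly greater than $\lambda - \delta$.

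The key step is to produce an analogue of the asymptotic formula (\ref{good}) for the sequence $\{\phi_{\varepsilon_j}\}$, with all constants independent of $j$. I would verify that the proof of Theorem \ref{111a} never uses the Lyapunov--Schmidt parameterization of $\psi_{\varepsilon,v}$ itself, but only that one has a positive heteroclinic of (\ref{twe0}) for which (\ref{expp}) holds uniformly. For the wavefronts $\phi_{\varepsilon_j}$, the bound $\sup_{t \geq 0} |\phi_{\varepsilon_j}(t)| \leq C_1$ is immediate from Lemma \ref{16} combined with Lemma \ref{13}. For the initial uniform exponential decay on $\R_-$ that starts the bootstrap of Lemma \ref{111}, I would exploit the integral equation (\ref{psea}), the normalization $\phi_{\varepsilon_j}(s) < \zeta_1/2$ for $s \leq 0$, and the expansion $g(u) = pu + O(u^2)$ near $0$, via a standard Gronwall-type argument. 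Iterating as in Theorem \ref{111a} then yields
$$\phi_{\varepsilon_j}(t) = B_j e^{\lambda_1(\varepsilon_j) t} + w_j(t), \quad t \leq 0,$$
with $|w_j(t)| \leq C e^{1.99 \lambda t}$ and $\{B_j\}$ uniformly bounded.

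With this uniform asymptotic in hand, the remainder follows the pattern of Corollary \ref{111b}. Fix $\eta > 0$. Since $\lambda_1(\varepsilon_j) \to \lambda$ and $1.99\lambda > \lambda > \lambda - \delta$, the displayed formula above together with the analogous asymptotic of $\psi$ from Lemma \ref{psi} let me choose $T < 0$ so that $e^{-(\lambda - \delta)s}(|\phi_{\varepsilon_j}(s)| + |\psi(s)|) \leq \eta/2$ for all $s \leq T$ and all $j$. This handles the tail $s \leq T$. On the compact interval $[T,0]$, Lemma \ref{16} yields $\sup_{s \in [T,0]} e^{-(\lambda-\delta)s}|\phi_{\varepsilon_j}(s) - \psi(s)| \leq \eta/2$ for all sufficiently large $j$; on $\R_+$ it gives $\|\phi_{\varepsilon_j} - \psi\|^+ \leq \eta/2$ for such $j$. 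Combining the three estimates yields $|\phi_{\varepsilon_j} - \psi|_{\lambda - \delta} \leq \eta$, which is the desired conclusion.

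The main obstacle is the uniform-in-$j$ asymptotic step: although the overall scheme coincides with that of Theorem \ref{111a}, one must track the constants carefully and check that they do not blow up along the sequence $\{\phi_{\varepsilon_j}\}$. The most delicate point is the initial exponential bound on $\R_-$ obtained without access to the Lyapunov--Schmidt parameterization, where the normalization $\phi_{\varepsilon_j}(0) = \zeta_1/2$ and the monotonicity of $\phi_{\varepsilon_j}$ on $(-\infty, \tau_j]$ from Lemma \ref{T2} play an essential role in securing the uniform rate $\gamma$ that feeds the bootstrap.
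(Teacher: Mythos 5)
Your proposal takes a genuinely different and considerably more laborious route than the paper, and along the way it contains a real gap. The paper's proof of Corollary~\ref{b} is a two-line argument: by Theorem~\ref{mrr} (applied with $\mu=0$), \emph{every} solution of Eq.~(\ref{twe0}) lying in the $C_0(\R)$-neighborhood $\mathcal{U}$ of $\psi$ is of the form $\psi_{\varepsilon,v}$ for some $(\varepsilon,v)\in\mathcal{E}_0\times\mathcal{V}_0$. Lemma~\ref{16} gives $\phi_{\varepsilon_j}\to\psi$ in $C_0(\R)$, so for $j$ large $\phi_{\varepsilon_j}\in\mathcal{U}$ and therefore $\phi_{\varepsilon_j}=\psi_{\varepsilon_j,v_j}$ for some $v_j$. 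Corollary~\ref{111b} then applies verbatim and yields the $C_{\lambda-\delta}(\R)$ convergence. You announce exactly this substitution (``with $\phi_{\varepsilon_j}$ playing the role of $\psi_{\varepsilon_j,v_j}$'') but then never actually make the identification; instead you set out to re-derive formula~(\ref{good}) directly for the sequence $\phi_{\varepsilon_j}$, which is a detour that the Lyapunov--Schmidt reduction was designed to avoid.

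The gap is precisely at the point you flag as ``the most delicate'': establishing the initial uniform-in-$j$ exponential bound~(\ref{expp}) for the raw wavefronts $\phi_{\varepsilon_j}$ on $\R_-$. You assert that the proof of Theorem~\ref{111a} ``never uses the Lyapunov--Schmidt parameterization'' and only needs~(\ref{expp}); but the paper's proof of Theorem~\ref{111a} \emph{derives}~(\ref{expp}) from the last assertion of Theorem~\ref{mrr}, i.e. from the fact that the parameterization map $(\varepsilon,v)\mapsto\psi_{\varepsilon,v}$ is continuous into $C_\gamma(\R)$ with $\gamma>0$ chosen uniformly on a compact subinterval of $[0,\lambda)\setminus\{\Re\lambda_j\}$. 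Replacing that with ``a standard Gronwall-type argument'' from~(\ref{psea}), the normalization, and monotonicity is not automatic: the kernel of~(\ref{psea}) has an $\varepsilon$-dependent forward part, the linearized rate $g'(0)=p>1$ prevents a naive Gronwall contraction, and the decay rate you extract must be bounded away from zero uniformly in $j$. This can be made to work, but it requires its own careful argument and you do not supply it. Using the identification $\phi_{\varepsilon_j}=\psi_{\varepsilon_j,v_j}$ sidesteps the whole issue, since the required uniform decay is then already packaged into Theorem~\ref{111a} and Corollary~\ref{111b}.
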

\begin{proof} Since $\phi_{\varepsilon_j} \to \psi$ in $C_{0}(\R)$, we have that  $\phi_{\varepsilon_j} = \psi_{\varepsilon_j,v_j}$
for some $v_j \in \mathcal{V}_0$. Now we can apply Corollary
\ref{111b} to find that $\phi_{\varepsilon_j} \to \psi$ in
$C_{\lambda-\delta}(\R)$.
\end{proof}
Lastly, Theorem \ref{mrr13} and Corollary \ref{b} implies that
$\phi_{\varepsilon_j}= \psi_{\varepsilon_j}$, a contradiction which
completes the proof of Theorem \ref{mr}.

\section*{Acknowledgments}
The authors thank Teresa Faria for useful discussions. S. Trofimchuk
was partially supported by CONICYT (Chile) through PBCT program
ACT-05 and by the University of Talca, program ``Reticulados y
Ecuaciones".
 S. Trofimchuk and G. Valenzuela were supported by
FONDECYT (Chile) project 1071053.

\end{document}